%% daj-template.tex v0.33     23 Sep 2016   Alex Russell/Laszlo Babai
%%
%% AUTHOR: Fill in fields (or see warnings) below marked with "AUTHOR"
%% ** Add as few macro / package definitions as possible
%% ** Compile with "pdflatex"; make sure that
%%           daj.cls and tocbase.cls are in the same directory.
%%
%% EDITOR: Fill in fields below marked with "EDITOR"
%%    and check that authors proprely filled in field marked with "AUTHOR"

\documentclass{daj}

\usepackage{amsmath}
\usepackage{amsthm, amssymb, amsfonts}
\usepackage{bbm}
\usepackage{enumitem}

\newtheorem{theorem}{Theorem}[section]

\newtheorem{lemma}[theorem]{Lemma}

\newtheorem{definition}[theorem]{Definition}
\newtheorem*{definition*}{Definition}

\theoremstyle{remark}
\newtheorem{remark}[theorem]{Remark}

\def\B{\mathcal{B}}

\def \cB{\mathcal B}
\def \cC{\mathcal C}
\def \cD{\mathcal D}
\def \cG{\mathcal G}

\def \bias{{\rm bias}}
\def \arank{{\rm arank}}
\def \prank{{\rm prank}}
\def \s{\subset}
\def\rank{\mathop{\mathrm{rank}}}
\def\e{\epsilon}
\def\E{\mathbb{E}}

\def\C{\mathbb{C}}

\def\P{\mathbb{P}}
\def\F{\mathbb{F}}

\def\a{\alpha}

\def\d{\delta}

%%%%%%%%%%%%%%%%%%%%%%%%%%%%%%%%%%%%%%%%%%%%%%%%
%% AUTHOR: Fill in meta-data below:
\dajAUTHORdetails{%
  title = {Polynomial bound for the Partition Rank vs the Analytic Rank of Tensors}, %% please capitalize all significant words
  author = {Oliver Janzer},
    %% Please use the format for commas as follows:
    %% "A", or "A and B", or "A, B, and C", or "A, B, C, and D", etc.
  plaintextauthor = {Oliver Janzer},
    %% An author list in plain text: Use the format
    %% "A", or "A, B", or "A, B, C", etc.
    %% NOTE: No LaTeX code in author names.
    %% NOTE: No "and" at the end--simply comma separated,
    % 
 %% The remaing lines in this section are optional:
    %
    %% IF YOUR TITLE CONTAINS MATH OR LATEX such as accented characters: 
    %% Add a "plain text title";  otherwise comment out the next line:
  %plaintexttitle = {Short Proof of Rodl's n**loglog n Bound}, %%  title without math or LaTeX
    %
    %% ONLY IF YOUR TITLE IS TOO LONG to fit in the page headers, please 
    %% add an abbreviated version of the title, otherwise comment it out:
  %runningtitle = {Polynomial bound for the partition rank vs the analytic rank of tensors}, 
    %
    %% ONLY IF YOUR AUTHOR LIST IS TOO LONG to fit in the page headers, 
    %% add an abbreviated version, otherwise comment it out:
  %runningauthor = {Paul Erd\H{o}s, Johan H{\aa}stad, L\'aszl\'o Lov\'asz, and Andrew C-C. Yao},
    %% you can replace first names and/or middle names with initials.
    %
    %% ONLY IF YOUR AUTHOR LIST IS TOO LONG to fit the copyright entry
    %% on the bottom of the front page,
    %% add an abbreviated version, otherwise comment it out:
  %copyrightauthor = {P. Erd\H{o}s, J. H{\aa}stad, L. Lov\'asz, and A. C-C. Yao},
    %% Note that the copyrightauthor  field will seldom be necessary;
    %% for instance, in this example with four authors, it would be 
    %% all right to comment it out and have all authors' full names 
    %% appear on the Copyright line
   %
   %% Include keywords of your choice: comma separated, lower case;
   %% comment out the "keywords" line if you don't wish to provide them
  keywords = {partition rank, analytic rank, tensor},
}   %%% END \dajAUTHORdetails

%%%%%%%%%%%%%%%%%%%%%%%%%%%%%%%%%%%%%%%%%%%%%%%%
%%% EDITOR: please fill in the following data:
\dajEDITORdetails{%
   year={2020},
   %volume={XX},
   number={7},
   received={23 October 2018},   % received date: example: 7 January 2017
   revised={6 March 2019},    % Optional revised date (you may comment it out)
   published={19 May 2020},  % published date
   doi={10.19086/da.12935},       % XXX = number of paper, e.g. da006 for paper#6
%                              % or  da0006 (length of string arbitrary)
}   %%% END \dajEDITORdetails

\begin{document}

\begin{frontmatter}[classification=text]
%% EDITOR: this will force the keywords to appear right after the Abstract.
%%   If the abstract is too long and would force the keywords off the
%%   front page, please comment out % [classification=text] above
%%   This way the keywords will be floated on the bottom of the first page
%%   even though the Abstract spills over to the next page.

%%% AUTHOR: Title goes here.  This line is optional.  You must use it
%%   if title has footnote attached or requires nontrivial typesetting,
%%   e.g., inclusion of linebreaks to force nice layout.
\title{Polynomial bound for the Partition Rank vs the Analytic Rank of Tensors} %% please capitalize all significant words

%%% AUTHOR:
%%% List all authors. If you wish, place grant acknowledgements in \thanks.
%%% In brackets include a short tag for each author.
\author[janzer]{Oliver Janzer}
%\author[johan]{Johan H{\aa}stad\thanks{Supported by...}}
%\author[laci]{L\'aszl\'o Lov\'asz\thanks{Supported by...}}
%\author[andy]{Andrew Chi-Chih Yao\thanks{Supported by...}}

%%% AUTHOR: Abstract goes here
\begin{abstract}
	A tensor defined over a finite field $\F$ has low analytic rank if the distribution of its values differs significantly from the uniform distribution. An order $d$ tensor has partition rank 1 if it can be written as a product of two tensors of order less than $d$, and it has partition rank at most $k$ if it can be written as a sum of $k$ tensors of partition rank 1. In this paper, we prove that if the analytic rank of an order $d$ tensor is at most $r$, then its partition rank is at most $f(r,d,|\F|)$, where, for fixed $d$ and $\F$, $f$ is a polynomial in $r$. This is an improvement of a recent result of the author, where he obtained a tower-type bound. Prior to our work, the best known bound was an Ackermann-type function in $r$ and $d$, though it did not depend on $\F$. It follows from our results that a biased polynomial has low rank; there too we obtain a polynomial dependence improving the previously known Ackermann-type bound.

	A similar polynomial bound for the partition rank was obtained independently and simultaneously by Mili\'cevi\'c.
\end{abstract}
\end{frontmatter}

%%% AUTHOR: body of paper starts here
\section{Introduction}

\subsection{Bias and rank of polynomials}

For a finite field $\F$ and a polynomial $P: \F^n\rightarrow \F$, we say that $P$ is unbiased if the distribution of the values $P(x)$ is close to the uniform distribution on $\F$; otherwise we say that $P$ is biased. It is an important direction of research in higher order Fourier analysis to understand the structure of biased polynomials.

Note that a generic degree $d$ polynomial should be unbiased. In fact, as we will see below, if a degree $d$ polynomial is biased, then it can be written as a function of not too many polynomials of degree at most $d-1$. Let us now make this discussion more precise.

\begin{definition}
	Let $\F$ be a finite field and let $\chi$ be a nontrivial character of $\F$. The \emph{bias} of a function $f:\F^n\rightarrow \F$ with respect to $\chi$ is defined to be $\bias_{\chi}(f)=\E_{x\in \F^{n}} \lbrack \chi(f(x)) \rbrack$. (Here and elsewhere in the paper $\E_{x\in G}h(x)$ denotes $\frac{1}{|G|}\sum_{x\in G}h(x)$.)
\end{definition}

\begin{remark}
	Most of the previous work is on the case $\F=\F_p$ with $p$ a prime, in which case the standard definition of bias is $\bias(f)=\E_{x\in \F^n} \omega^{f(x)}$ where $\omega=e^{\frac{2\pi i}{p}}$.
\end{remark}

\begin{definition}
	Let $P$ be a polynomial $\F^n\rightarrow \F$ of degree $d$. The \emph{rank} of $P$ (denoted $\rank(P)$) is defined to be the smallest integer $r$ such that there exist polynomials $Q_1,\dots,Q_r:\F^n\rightarrow \F$ of degree at most $d-1$ and a function $f:\F^r\rightarrow \F$ such that $P=f(Q_1,\dots,Q_r)$.
\end{definition}

As discussed above, it is known that if a polynomial has large bias, then it has low rank. The first result in this direction was proved by Green and Tao \cite{greentao} who showed that if $\F$ is a field of prime order and $P:\F^n\rightarrow \F$ is a polynomial of degree $d$ with $d<|\F|$ and $\bias(P)\geq \d>0$, then $\rank(P)\leq c(\F,\d,d)$. Kaufman and Lovett \cite{kaufmanlovett} proved that the condition $d<|\F|$ can be omitted. In both results, $c$ has Ackermann-type dependence on its parameters. Finally, Bhowmick and Lovett \cite{bhowmicklovett} proved that if $d<\text{char}(\F)$ and $\bias(P)\geq |\F|^{-s}$, then $\rank(P)\leq c'(d,s)$. The novelty of this result is that $c'$ does not depend on $\F$. However, it still has Ackermann-type dependence on $d$ and $s$.

One of our main results is the following theorem, which improves the result of Bhowmick and Lovett, unless $|\F|$ is very large.

\begin{theorem} \label{biaslowrank}
	Let $\F$ be a finite field and let $\chi$ be a nontrivial character of $\F$. Let $P$ be a polynomial $\F^n\rightarrow \F$ of degree $d<\text{char}(\F)$. Suppose that $\bias_{\chi}(P)\geq \e>0$ where $\e\leq 1/|\F|$. Then $$\rank(P)\leq (c\cdot 2^d \cdot \log(1/\e))^{c'(d)}+1$$
	where $c$ is an absolute constant and $c'(d)=4^{d^d}$.
\end{theorem}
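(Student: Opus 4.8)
The plan is to deduce Theorem~\ref{biaslowrank} from the corresponding polynomial bound for tensors, following the standard dictionary between biased polynomials and low-analytic-rank tensors. First I would pass from the polynomial $P$ of degree $d$ to a symmetric $d$-linear form: since $d<\mathrm{char}(\F)$, the top-degree part of $P$ is captured by the multilinear form $T(x^{(1)},\dots,x^{(d)})$ obtained from the $d$-th discrete derivative $\Delta_{h_1}\cdots\Delta_{h_d}P$, which is genuinely $d$-linear and whose non-vanishing is controlled by $\bias_\chi(P)$. Concretely, a Cauchy–Schwarz / Gowers-norm argument (the usual derivative argument, e.g.\ as in Green–Tao and Kaufman–Lovett) shows that $|\bias_\chi(P)|\ge\e$ forces $\E_{x^{(1)},\dots,x^{(d)}}\chi(T(x^{(1)},\dots,x^{(d)}))\ge \e^{2^d}$, i.e.\ the $d$-linear tensor $T$ has analytic rank at most $\arank(T)=-\log_{|\F|}(\text{this bias})\le 2^d\log_{|\F|}(1/\e)$, up to the factor $2^d$ coming from the $d$ applications of Cauchy–Schwarz.

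**Applying the tensor bound.**
Next I would invoke the polynomial partition-rank-vs-analytic-rank bound (the main tensor theorem of the paper, which for fixed $d$ and $\F$ gives $\prank(T)\le f(\arank(T),d,|\F|)$ with $f$ polynomial in the first argument), to conclude that $T$ has partition rank at most roughly $(\arank(T))^{c'(d)}$ with $c'(d)=4^{d^d}$; substituting the bound on $\arank(T)$ yields $\prank(T)\le (c\cdot 2^d\log(1/\e))^{c'(d)}$. Then I would translate a partition-rank decomposition of $T$ back into a low-rank expression for $P$: each partition-rank-$1$ piece $T=U\otimes V$ with $U,V$ of lower order corresponds, after restoring the polynomial structure, to a product of two lower-degree polynomials, so a partition-rank-$k$ decomposition of the top-degree part of $P$ lets us write that part as a function of $O(k)$ polynomials of degree $\le d-1$. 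Peeling off the top-degree part and absorbing the lower-degree terms of $P$ (which contribute at most one further polynomial, hence the ``$+1$''), we get $\rank(P)\le \prank(T)+1\le (c\cdot 2^d\log(1/\e))^{c'(d)}+1$, as claimed.

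**Where the work is.**
The genuinely hard step is of course the tensor statement itself—the polynomial bound for partition rank in terms of analytic rank—but that is the paper's main theorem and may be assumed here; within the proof of Theorem~\ref{biaslowrank} the only delicate points are (i) making the symmetrization/derivative step work uniformly when $d<\mathrm{char}(\F)$ but $|\F|$ need not be prime, so that the $d$-linear form is truly multilinear and its bias is bounded below by $\e^{2^d}$, and (ii) being careful that converting a partition-rank decomposition of the multilinear form into a rank decomposition of $P$ does not lose more than a constant (in fact $+1$) in the count, which requires checking that a product $U\otimes V$ of multilinear forms genuinely lifts to a product of polynomials of degrees summing to $d$, each therefore of degree $\le d-1$. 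I would handle (i) by the standard multilinearization lemma and (ii) by a direct bookkeeping argument on the monomials. Tracking constants through these two steps is what produces the explicit $c\cdot 2^d\log(1/\e)$ inside the exponent and the exponent $c'(d)=4^{d^d}$ inherited from the tensor theorem.
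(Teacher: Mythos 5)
Your proposal is correct and follows essentially the same route as the paper: the paper deduces Theorem~\ref{biaslowrank} by noting $\|\chi(P)\|_{U^d}\ge\|\chi(P)\|_{U^1}=|\bias_\chi(P)|\ge\e$ and invoking Theorem~\ref{largenormlowrank}, which is itself proved exactly as you describe --- multilinearize via the $d$-th derivative tensor, get $\bias(T)\ge\e^{2^d}$ so $\arank(T)\le 2^d\log_{|\F|}(1/\e)$, apply Theorem~\ref{analyticpartition}, and recover $P=\frac{1}{d!}T(x,\dots,x)+W$ by Taylor (using $d<\mathrm{char}(\F)$), with $W$ accounting for the $+1$. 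The only cosmetic difference is that you fold the intermediate Gowers-norm statement into a single reduction rather than passing through it explicitly.
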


Recall that if $G$ is an Abelian group and $d$ is a positive integer, then the Gowers $U^d$ norm (which is only a seminorm for $d=1$) of $f:G\rightarrow \C$ is defined to be
$$\|f\|_{U^{d}}=\big|\E_{x,y_1,\dots,y_d\in G} \prod_{S\s \lbrack d\rbrack} \cC^{d-|S|} f(x+\sum_{i\in S}y_i)\big|^{1/2^d},$$
where $\cC$ is the conjugation operator. It is a major area of research to understand the structure of functions $f$ whose $U^d$ norm is large. Our next theorem is a result in this direction.

\begin{theorem} \label{largenormlowrank}
	Let $\F$ be a finite field and let $\chi$ be a nontrivial character of $\F$. Let $P$ be a polynomial $\F^n\rightarrow \F$ of degree $d<\text{char}(\F)$. Let $f(x)=\chi(P(x))$ and assume that $\|f\|_{U^{d}}\geq \e>0$ where $\e\leq 1/|\F|$. Then $$\rank(P)\leq (c\cdot 2^d \cdot \log(1/\e))^{c'(d)}+1$$
	where $c$ is an absolute constant and $c'(d)=4^{d^d}$.
\end{theorem}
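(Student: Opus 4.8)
The plan is to differentiate $P$ exactly $d$ times, turning it into a $d$-linear form (an order $d$ tensor), to translate $\|f\|_{U^d}$ into the bias of that form, and then to feed this into the main theorem of the paper, which bounds the partition rank of a tensor polynomially in terms of its analytic rank. Steps $1$ and $3$ below are the classical polynomial/tensor dictionary in the spirit of Green--Tao and Kaufman--Lovett; all the real work is in the tensor result invoked in step $2$.

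\emph{Step 1: passing to the derivative tensor.} Since $\deg P\le d$, the $d$-fold iterated additive derivative (with $\Delta_y g(x):=g(x+y)-g(x)$)
$$B(y_1,\dots,y_d)\ :=\ \Delta_{y_1}\cdots\Delta_{y_d}P(x)\ =\ \sum_{S\s[d]}(-1)^{d-|S|}\,P\big(\textstyle\sum_{i\in S}y_i\big)\qquad(y_1,\dots,y_d\in\F^n)$$
is independent of the base point $x$ and is multilinear in each $y_i$, so $B$ is a (symmetric) order $d$ tensor over $\F$. Expanding the definition of the Gowers norm of $f=\chi\circ P$ and using $\overline{\chi(a)}=\chi(-a)$ gives
$$\|f\|_{U^d}^{2^d}\ =\ \big|\E_{x,y_1,\dots,y_d\in\F^n}\chi(\Delta_{y_1}\cdots\Delta_{y_d}P(x))\big|\ =\ \big|\E_{y_1,\dots,y_d\in\F^n}\chi(B(y_1,\dots,y_d))\big|\ =\ |\bias_\chi(B)|,$$
the middle step because the inner quantity does not depend on $x$. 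Hence $|\bias_\chi(B)|\ge\e^{2^d}>0$, so the analytic rank of $B$ (the negative base-$|\F|$ logarithm of its bias) satisfies $\arank(B)\le\log_{|\F|}(\e^{-2^d})=2^d\log_{|\F|}(1/\e)\le 2^d\log_2(1/\e)$.

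\emph{Steps 2 and 3: applying the partition rank bound and transferring back.} Next I would invoke the paper's main theorem bounding $\prank$ polynomially in $\arank$, applied to the order $d$ tensor $B$; substituting the above estimate it yields $\prank(B)\le(c\cdot 2^d\log(1/\e))^{c'(d)}$, where the hypothesis $\e\le 1/|\F|$ is exactly what lets one absorb the $|\F|$-dependent factors of that theorem (writing $\e=|\F|^{-s}$ with $s\ge1$ one has $\arank(B)\le 2^ds$ while $2^d\log(1/\e)=2^ds\log|\F|\ge(\ln2)\,2^ds$). To go back to $P$: since $d<\mathrm{char}(\F)$, the quantity $d!$ is invertible and the diagonal restriction recovers the top homogeneous part, $B(x,\dots,x)=d!\,P_d(x)$. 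Taking a partition rank decomposition $B=\sum_{j=1}^k B_j'\otimes B_j''$ with $k=\prank(B)$ (each $j$ splitting $[d]$ into a proper nonempty part carrying $B_j'$ and its complement carrying $B_j''$) and setting all arguments equal to $x$ turns the $j$-th term into a product $Q_j(x)R_j(x)$ of two polynomials of degree $\le d-1$, so $d!\,P_d=\sum_{j=1}^k Q_jR_j$ expresses $P_d$ as a function of the $2k$ polynomials $Q_1,R_1,\dots,Q_k,R_k$ of degree $\le d-1$, whence $\rank(P_d)\le 2\,\prank(B)$. Finally $P-P_d$ has degree $\le d-1$, so $\rank(P)\le\rank(P_d)+1$; chaining everything (and enlarging $c$ to swallow the factor $2$) gives $\rank(P)\le(c\cdot 2^d\log(1/\e))^{c'(d)}+1$.

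\emph{Where the difficulty lies.} The entire content is concentrated in step $2$, namely the polynomial partition-rank versus analytic-rank inequality for order $d$ tensors, which is the heart of the paper; steps $1$ and $3$ are soft, the only points requiring care being the use of $d<\mathrm{char}(\F)$ in step $3$ (so that $d!$ is invertible and $B$ faithfully encodes $P_d$) and the bookkeeping of the $|\F|$-factors and the constant $2$. As a by-product, combining with the monotonicity $|\bias_\chi(P)|=\|f\|_{U^1}\le\|f\|_{U^d}$ of the Gowers norms shows that Theorem~\ref{largenormlowrank} immediately implies Theorem~\ref{biaslowrank}, so it suffices to prove the former.
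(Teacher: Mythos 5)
Your argument is correct and is essentially the paper's own proof: the same derivative tensor $T(y_1,\dots,y_d)=\sum_{S\s[d]}(-1)^{d-|S|}P(\sum_{i\in S}y_i)$, the same identity $\bias(T)=\|f\|_{U^d}^{2^d}$ giving $\arank(T)\leq 2^d\log_{|\F|}(1/\e)$ (with $\e\leq 1/|\F|$ ensuring $r\geq 1$), an application of Theorem \ref{analyticpartition}, and the diagonal restriction via $d<\mathrm{char}(\F)$ to bound $\rank(P)$. The only cosmetic difference is your explicit factor $2$ from counting both factors $Q_j,R_j$ of each partition-rank-one term, which, as you note, is absorbed into the absolute constant $c$.
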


Our result implies a similar improvement to the bounds for the quantitative inverse theorem for Gowers norms for polynomial phase functions of degree $d$.

\begin{theorem} \label{inverse}
	Let $\F$ be a field of prime order and let $P$ be a polynomial $\F^n\rightarrow \F$ of degree $d<\text{char}(\F)$. Let $f(x)=\omega^{P(x)}$ where $\omega=e^{\frac{2\pi i}{|\F|}}$ and assume that $\|f\|_{U^d}\geq \e>0$ where $\e\leq 1/|\F|$. Then there exists a polynomial $Q:\F^n\rightarrow \F$ of degree at most $d-1$ such that $$|\E_{x\in \F^n} \omega^{P(x)}\overline{\omega^{Q(x)}}|\geq |\F|^{-(c\cdot 2^d \cdot \log(1/\e))^{c'(d)}-1}$$
	where $c$ is an absolute constant and $c'(d)=4^{d^d}$.
\end{theorem}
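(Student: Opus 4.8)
The plan is to deduce Theorem~\ref{inverse} from Theorem~\ref{largenormlowrank} by an elementary Fourier extraction on $\F^r$; there is essentially no new difficulty once that theorem is in hand. Since $\F=\F_p$ has prime order, $e_p\colon t\mapsto\omega^t$ is a nontrivial character of $\F$, and $f(x)=\omega^{P(x)}=e_p(P(x))$ is exactly a polynomial phase of the kind covered by Theorem~\ref{largenormlowrank}. As $d<\mathrm{char}(\F)$ and $\|f\|_{U^d}\ge\e$ with $\e\le 1/|\F|$, that theorem gives
$$\rank(P)\le r:=(c\cdot 2^d\cdot\log(1/\e))^{c'(d)}+1 .$$
Unfolding the definition of rank, there are polynomials $Q_1,\dots,Q_r\colon\F^n\to\F$ of degree at most $d-1$ and a function $\phi\colon\F^r\to\F$ with $P=\phi(Q_1,\dots,Q_r)$ identically (padding with zero polynomials if $\rank(P)<r$).

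Next I would Fourier-expand the outer function. Writing $g(y)=\omega^{\phi(y)}$ for $y\in\F^r$, we have $g(y)=\sum_{a\in\F^r}c_a\,e_p(a\cdot y)$ with $c_a=\E_{y\in\F^r}[g(y)\,e_p(-a\cdot y)]$, and Plancherel gives $\sum_a|c_a|^2=\E_y|g(y)|^2=1$, hence $\sum_a|c_a|\le|\F|^{r/2}$ by Cauchy--Schwarz. Substituting $y=(Q_1(x),\dots,Q_r(x))$ for $x\in\F^n$ yields the pointwise identity $\omega^{P(x)}=\sum_{a\in\F^r}c_a\,\omega^{Q_a(x)}$, where $Q_a:=a_1Q_1+\dots+a_rQ_r$ is again a polynomial $\F^n\to\F$ of degree at most $d-1$; here one uses that over a prime field the dual of $\F^r$ is canonically $\F^r$, so the frequencies $a_i$ are elements of $\F$ and $e_p(a\cdot y)=\omega^{\sum_i a_iy_i}$. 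Multiplying this identity by $\overline{\omega^{P(x)}}$ and averaging over $x$ gives
$$1=\E_{x\in\F^n}\big[|\omega^{P(x)}|^2\big]=\sum_{a\in\F^r}\overline{c_a}\,\E_{x\in\F^n}\big[\omega^{P(x)}\overline{\omega^{Q_a(x)}}\big].$$
By the triangle inequality together with $\sum_a|c_a|\le|\F|^{r/2}$, some $a$ satisfies $\big|\E_{x\in\F^n}[\omega^{P(x)}\overline{\omega^{Q_a(x)}}]\big|\ge|\F|^{-r/2}\ge|\F|^{-r}$. Taking $Q=Q_a$, which has degree at most $d-1$, and recalling $r=(c\cdot 2^d\cdot\log(1/\e))^{c'(d)}+1$, gives precisely the conclusion.

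The only genuine obstacle is Theorem~\ref{largenormlowrank} itself (and hence, ultimately, the polynomial partition-rank vs.\ analytic-rank bound); the reduction above is routine and loses only a square root in the exponent, which is harmless. The point that does need care is the bookkeeping between arithmetic in $\F$ and arithmetic in the dual group: over a prime field $Q_a$ is literally an $\F$-linear combination of the $Q_i$, so its degree is unambiguously at most $d-1$, whereas over a general finite field one would have to compose with the field trace and then check that the resulting polynomial still has degree at most $d-1$ — this is why the statement is phrased for fields of prime order. It is also worth emphasising that no equidistribution assumption on the tuple $(Q_1,\dots,Q_r)$ is required: the identity $1=\sum_a\overline{c_a}\,\E_x[\omega^{P(x)}\overline{\omega^{Q_a(x)}}]$ combined with Plancherel sidesteps it entirely, which is what keeps the bound polynomial.
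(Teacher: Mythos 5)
Your proposal is correct and follows essentially the same route as the paper: deduce $\rank(P)\le r$ from Theorem~\ref{largenormlowrank}, Fourier-expand the outer function on $\F^r$, substitute $(Q_1(x),\dots,Q_r(x))$, and extract one character $\omega^{a\cdot y}$ by averaging $|\omega^{P(x)}|^2=1$, using that over a prime field the resulting $Q_a=\sum_i a_iQ_i$ has degree at most $d-1$. The only (harmless) difference is the pigeonhole at the end: you use Plancherel and Cauchy--Schwarz to get $\sum_a|c_a|\le|\F|^{r/2}$ and hence a correlation of at least $|\F|^{-r/2}$, whereas the paper simply bounds each Fourier coefficient by $1$ and settles for $|\F|^{-r}$, which is all the statement requires.
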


Theorems \ref{biaslowrank} and \ref{inverse} easily follow from Theorem \ref{largenormlowrank}.

\smallskip

\emph{Proof of Theorem \ref{biaslowrank}.} Note that when $f(x)=\chi(P(x))$, then $\|f\|^2_{U^1}=|\E_{x,y\in \F^n}\overline{f(x)}f(x+y)|=|\E_{x\in \F^n}f(x)|^2$, so $\|f\|_{U^1}=|E_{x\in \F^n}f(x)|=|\bias_{\chi}(P)|$. However, $\|f\|_{U^k}$ is increasing in $k$ (see eg. Claim 6.2.2 in \cite{fouriersurvey}), therefore $\|f\|_{U^d}\geq |\bias_{\chi}(P)|\geq \e$. The result is now immediate from Theorem \ref{largenormlowrank}.

\smallskip

\emph{Proof of Theorem \ref{inverse}.} By Theorem \ref{largenormlowrank}, there exists a set of $r\leq (c\cdot 2^d \cdot \log(1/\e))^{c'(d)}+1$  polynomials $Q_1,\dots,Q_r$ such that $P(x)$ is a function of $Q_1(x),\dots,Q_r(x)$. \newline Then $\omega^{P(x)}=g(Q_1(x),\dots,Q_r(x))$ for some function $g:\F^r\rightarrow \C$. Let $G=\F^r$. Note that $|g(y)|=1$ for all $y\in G$, therefore $|\hat{g}(\chi)|\leq 1$ for every character $\chi\in \hat{G}$. Now $\omega^{P(x)}=\sum_{\chi\in \hat{G}} \hat{g}(\chi)\,\chi((Q_1(x),\dots,Q_r(x))$, so $$1=\E_{x\in \F^n} |\omega^{P(x)}|^2=\sum_{\chi \in \hat{G}} \overline{\hat{g}(\chi)}\bigg( \E_{x\in \F^n} \omega^{P(x)}\overline{\chi(Q_1(x),\dots,Q_r(x))} \bigg).$$ Thus, there exists some $\chi\in \hat{G}$ with $|\E_{x\in \F^n} \omega^{P(x)}\overline{\chi(Q_1(x),\dots,Q_r(x))}|\geq 1/|G|=1/|\F|^r$. But $\chi$ is of the form $\chi(y_1,\dots,y_r)=\omega^{\sum_{i\leq r} \a_i y_i}$ for some $\a_i\in \F$. Then $\chi(Q_1(x),\dots,Q_r(x))=\omega^{Q_{\a}(x)}$, where $Q_{\a}$ is the degree $d-1$ polynomial $Q_{\a}(x)=\sum_{i\leq r} \a_iQ_i(x)$. So $Q=Q_{\a}$ is a suitable choice.

\subsection{Analytic rank and partition rank of tensors}

Related to the bias and rank of polynomials are the notions of \emph{analytic rank} and \emph{partition rank} of tensors. Recall that if $\F$ is a field and $V_1,\dots,V_d$ are finite dimensional vector spaces over $\F$, then an order $d$ tensor is a multilinear map $T:V_1\times \dots \times V_d\rightarrow \F$. (In this subsection, assume that $d\geq 2$.) Each $V_k$ can be identified with $\F^{n_k}$ for some $n_k$, and then there exist $t_{i_1,\dots,i_d}\in \F$ for all $i_1\leq n_1,\dots,i_d\leq n_d$ such that $T(v^1,\dots,v^d)=\sum_{i_1\leq n_1,\dots,i_d\leq n_d} t_{i_1,\dots,i_d} v^1_{i_1}\dots v^d_{i_d}$ for every $v^1\in \F^{n_1},\dots,v^d\in \F^{n_d}$ (where $v_k$ is the $k$th coordinate of the vector $v$). Indeed, $t_{i_1,\dots,i_d}$ is just $T(e^{i_1},\dots,e^{i_d})$, where $e^i$ is the $i$th standard basis vector.

The following notion was introduced by Gowers and Wolf \cite{gowerswolf}.

\begin{definition}
	Let $\F$ be a finite field, let $V_1,\dots,V_d$ be finite dimensional vector spaces over $\F$ and let $T:V_1\times \dots \times V_d\rightarrow \F$ be an order $d$ tensor. Then the \emph{analytic rank} of $T$ is defined to be $\arank(T)=-\log_{|\F|} \bias(T)$, where $\bias(T)=\E_{v^1\in V_1,\dots,v^d\in V_d} \lbrack \chi(T(v^1,\dots,v^d))\rbrack$ for any nontrivial character $\chi$ of $\F$.
\end{definition}

\begin{remark} \label{welldefined}
	This is well-defined. Indeed, if $\chi$ is a nontrivial character of $\F$, then
	\begin{align*}
	\E_{v^1\in V_1,\dots,v^d\in V_d} \lbrack \,\chi(T(v^1,\dots,v^d))\rbrack&=\E_{v^1\in V_1,\dots,v^{d-1}\in V_{d-1}}\lbrack \E_{v^d\in V_d} \,\chi(T(v^1,\dots,v^d))\rbrack \\
	&=\P_{v^1\in V_1,\dots,v^{d-1}\in V_{d-1}} \lbrack T(v^1,\dots,v^{d-1},x)\equiv 0\rbrack,
	\end{align*}
	where $T(v^1,\dots,v^{d-1},x)$ is viewed as a function in $x$. The second equality holds because \newline $\E_{v^d\in V_d} \,\chi(T(v^1,\dots,v^d))=0$ unless $T(v^1,\dots,v^{d-1},x)\equiv 0$, in which case it is 1.
	
	Thus, $\E_{v^1\in V_1,\dots,v^d\in V_d} \lbrack \,\chi(T(v^1,\dots,v^d))\rbrack$ does not depend on $\chi$, and is always positive. Moreover, it is at most 1, therefore the analytic rank is always nonnegative.
\end{remark}

\smallskip

A different notion of rank was defined by Naslund \cite{naslund}.

\begin{definition}
	Let $T:V_1\times \dots \times V_d\rightarrow \F$ be a (non-zero) order $d$ tensor. We say that $T$ has \emph{partition rank} 1 if there is some $S\s \lbrack d\rbrack$ with $S\neq \emptyset,S\neq \lbrack d\rbrack$ such that $T(v^1,\dots,v^d)=T_1(v^{i}:i\in S)T_2(v^{i}:i\not \in S)$ where $T_1:\prod_{i\in S}V_i\rightarrow \F,T_2:\prod_{i\not\in S}V_i\rightarrow \F$ are tensors. In general, the partition rank of $T$ is the smallest $r$ such that $T$ can be written as the sum of $r$ tensors of partition rank 1. This number is denoted $\prank(T)$.
\end{definition}

Kazhdan and Ziegler \cite{KZ18} and Lovett \cite{lovett} proved that $\arank(T)\leq \prank(T)$. In the other direction, it follows from the work of Bhowmick and Lovett \cite{bhowmicklovett} that if an order $d$ tensor $T$ has $\arank(T)\leq r$, then $\prank(T)\leq f(r,d)$ for some function $f$. Note that $f$ does not depend on $|\F|$ or the dimension of the vector spaces $V_k$. However, $f$ has an Ackermann-type dependence on $d$ and $r$. For $d=3,4$, better bounds were established by Haramaty and Shpilka \cite{HS10}. They proved that for $d=3$ we have $\prank(T)=O(r^4)$, and that for $d=4$ we have $\prank(T)=\exp(O(r))$.

The main result of our paper is a polynomial upper bound, which holds for general $d$.

%This improves the result of Bhowmick and Lovett and also the bound  different bound under the same assumptions, which is much stronger unless $|\F|$ is very large.

\begin{theorem} \label{analyticpartition}
	Let $T:V_1\times \dots \times V_d\rightarrow \F$ be an order $d$ tensor with $\arank(T)\leq r$ and assume that $r\geq 1$. Then
	$$\prank(T)\leq (c\cdot \log |\F|)^{{c'(d)}} \cdot r^{c'(d)}$$ for some absolute constant $c$, and $c'(d)=4^{d^d}$.
\end{theorem}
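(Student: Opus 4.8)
The plan is to prove the theorem by induction on $d$, reducing an order-$d$ tensor to order-$(d-1)$ tensors by slicing off the last coordinate. For the base case $d=2$ there is nothing to do: a bilinear form $T$ satisfies $\bias(T)=|\F|^{-\rank(T)}$ and $\prank(T)=\rank(T)$, so $\prank(T)=\arank(T)\le r$, comfortably within the claimed bound.

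For the inductive step, suppose the bound holds in order $d-1$, say $\prank\le B_{d-1}(r):=(c\log|\F|)^{c'(d-1)}r^{c'(d-1)}$. Write $V_d=\F^m$ and expand $T(v^1,\dots,v^{d-1},v^d)=\sum_{j=1}^m v^d_j\,T_j(v^1,\dots,v^{d-1})$ with each $T_j$ an order-$(d-1)$ tensor; for $\alpha\in\F^m$ put $T_\alpha=\sum_j\alpha_j T_j$. Averaging over $\alpha$ and using that $\E_\alpha\chi(\sum_j\alpha_j T_j(v))$ is the indicator of $\{T_j(v)=0\ \forall j\}$, one gets $\E_\alpha\bias(T_\alpha)=\bias(T)\ge|\F|^{-r}$, so a $\ge\tfrac12|\F|^{-r}$ fraction of the $\alpha$ have $\arank(T_\alpha)\le r+1$ and hence, by the inductive hypothesis, $\prank(T_\alpha)\le K:=B_{d-1}(r+1)$. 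The naive way to exploit this is a peeling argument: taking one good $\alpha$ and changing basis so that $\alpha=e_1$, the tensor $v^d_1T_1$ has partition rank $\le K$ (multiply each rank-$1$ piece of $T_1$ by the coordinate functional $v^d\mapsto v^d_1$), while $T-v^d_1T_1=\sum_{j\ge 2}v^d_jT_j$ is supported on $V_1\times\dots\times V_{d-1}\times V_d'$ with $\codim_{V_d}V_d'=1$ and has bias $\P_v[T_j(v)=0\ \forall j\ge 2]\ge\bias(T)$ (we only dropped constraints); so we may strip partition rank $\le K$ while dropping one dimension of $V_d$ and preserving the hypothesis. Iterating this gives $\prank(T)\le mK$ — but $m=\dim V_d$ is unbounded, and stripping one coordinate at a time is exactly what produced the tower-type bounds in earlier work.

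The key step — and the place where a genuinely new idea is required — is to strip off many coordinates of $V_d$ at once, i.e.\ to upgrade ``many $\alpha$ are good'' to ``a subspace $W\le\F^m$ of codimension $\mathrm{poly}(r,\log|\F|)$ consists of good $\alpha$''. The intended mechanism is a differencing argument: with $Y=\{\alpha:\prank(T_\alpha)\le K\}$ of density $\ge\tfrac12|\F|^{-r}$, subadditivity of partition rank gives $Y-Y\subseteq\{\gamma:\prank(T_\gamma)\le 2K\}$, and a Bogolyubov-type iteration — exploiting that $Y$ is the preimage of the low-partition-rank locus under the linear map $\alpha\mapsto T_\alpha$, together with $\arank\le\prank$ from Kazhdan--Ziegler/Lovett — produces such a $W$; the iteration count, and the bound on the relevant spectrum, are where the $\log|\F|$ factor enters. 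After a change of basis with $W=\langle e_1,\dots,e_{m-c}\rangle$ and $c=\codim W$, the tensor $\sum_{j>m-c}v^d_jT_j$ has partition rank $\le c$, and everything reduces to bounding the partition rank of $\sum_{j\le m-c}v^d_jT_j$ given that \emph{every} $\F$-linear combination of $T_1,\dots,T_{m-c}$ has partition rank $\le 2K$, by a bound depending only on $K$ and $d$ and not on $m-c$. I expect this last point to be the main obstacle: it is a structure theorem for linear spaces of tensors of bounded partition rank (its bilinear instance being a structure theorem for linear spaces of matrices of bounded rank), it is itself naturally proved by an induction on $d$, and it is the compounding of its polynomial loss through the outer induction on $d$ that forces the exponent to grow as fast as $c'(d)=4^{d^d}$.

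The remaining ingredients are routine bookkeeping: $\prank(T-R)\ge\prank(T)-\prank(R)$ and $\bias(T-R)\ge|\F|^{-\prank(R)}\bias(T)$ let one freely subtract tensors of controlled partition rank while keeping track of the bias, and the WLOG reduction to the case where the $T_j$ are linearly independent ($m=\dim\mathrm{span}(T_j)$) lets one pass between slicings. Finally, once the order-$(d-1)$ bound is available it is worth noting how Theorem~\ref{largenormlowrank} (and hence Theorems~\ref{biaslowrank} and~\ref{inverse}) would follow: for $d<\mathrm{char}(\F)$ the iterated additive derivative $(h^1,\dots,h^d)\mapsto\partial_{h^1}\cdots\partial_{h^d}P$ is a symmetric $d$-linear form $T_P$ with $\bias(T_P)=\|\chi(P)\|_{U^d}^{2^d}$, so $\|\chi(P)\|_{U^d}\ge\e$ gives $\arank(T_P)\le 2^d\log_{|\F|}(1/\e)$; applying Theorem~\ref{analyticpartition} to $T_P$ and converting low partition rank of $T_P$ into low rank of $P$ yields the stated bound, the $\log|\F|$ in the denominator of $\log_{|\F|}$ cancelling the $(c\log|\F|)^{c'(d)}$ factor.
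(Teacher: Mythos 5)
Your reduction (slice $T=\sum_j v^d_jT_j$, average to find a dense set $Y$ of good coefficient vectors $\alpha$ with $\prank(T_\alpha)\le K$, apply Sanders' Bogolyubov lemma to place a subspace $W$ of codimension $\mathrm{poly}(r,\log|\F|)$ inside $2Y-2Y$, and peel off the $\le\codim W$ coordinates outside $W$) is fine as far as it goes, and the surrounding bookkeeping (subadditivity of $\prank$, the deduction of Theorems \ref{biaslowrank}--\ref{inverse} from Theorem \ref{analyticpartition}) matches the paper. But the step you yourself flag as the ``main obstacle'' is a genuine gap, not a deferred technicality: you need that if \emph{every} linear combination of $T_1,\dots,T_{m-c}$ has partition rank at most $O(K)$, then $\prank\bigl(\sum_{j\le m-c}v^d_jT_j\bigr)$ is bounded by a polynomial in $K$ depending only on $d$, independently of $m-c$. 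No proof or citable source with polynomial bounds is offered for this, and it is essentially as hard as the theorem itself: even its bilinear instance (linear spaces of matrices of bounded rank) is not handled by the classical structure theory with the uniform polynomial control you need (spaces of bounded-rank matrices need not be compression spaces), and for order $d-1\ge 3$ no structure theorem for linear spaces of tensors of bounded partition rank is available at all. The inductive compounding you describe presupposes that this inner structure theorem can itself be proved with polynomial loss by some independent argument, which is exactly what is missing; as written, the induction on $d$ has no engine.

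For comparison, the paper never passes through spaces of low-partition-rank slices. It works directly with the zero set of $T$ on pure tensors: since $\bias(T)\ge|\F|^{-r}$, the multiset of pure tensors $v_1\otimes\dots\otimes v_{d-1}$ annihilated by $T$ is $|\F|^{-r}$-dense, and the main technical result (Lemma \ref{mainlemma}) shows that from any dense multiset of pure tensors one can extract a ``forcing'' multiset $Q$ such that any array orthogonal to almost all of $Q$ lies in $\sum_I V_I\otimes\F^{I^c}$ with all $V_I$ of polynomially bounded dimension; this is proved by induction on $d$ using Sanders' lemma, the $l$-system construction (Lemmas \ref{intersect}--\ref{systemspace}), and the iterative space-growing arguments of Lemmas \ref{focusondeg}, \ref{claim} and \ref{keylemma}, and Theorem \ref{analyticpartition} then follows via Lemma \ref{biastorank}. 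So the shared ingredient is only Sanders' Bogolyubov lemma; the structural heart of your proposal is absent from the paper and remains unproved in your sketch.
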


We remark that a very similar result was obtained independently and simultaneously by Mili\'cevi\'c \cite{Mil19}. Moreover, in the special case $d=4$, a similar bound was proved independently by Lampert \cite{Lam19}.

\medskip

It is not hard to see that Theorem \ref{analyticpartition} implies Theorem \ref{largenormlowrank}. Indeed, let $P$ be a polynomial $\F^n\rightarrow \F$ of degree $d<\text{char}(\F)$, let $f(x)=\chi(P(x))$ and assume that $\|f\|_{U^d}\geq \e>0$, where $\e\leq 1/|\F|$. Define $T:(\F^n)^d \rightarrow \F$ by $T(y_1,\dots,y_d)=\sum_{S\s \lbrack d\rbrack} (-1)^{d-|S|}P(\sum_{i\in S} y_i)$. By Lemma 2.4 from \cite{gowerswolf}, $T$ is a tensor of order $d$. Moreover, by the same lemma, we have $T(y_1,\dots,y_d)=\sum_{S\s \lbrack d\rbrack} (-1)^{d-|S|}P(x+\sum_{i\in S} y_i)$ for any $x\in \F^n$. Thus, $$\bias(T)=\E_{y_1,\dots,y_d\in \F^n} \, \chi(T(y_1,\dots,y_d))=\E_{y_1,\dots,y_d\in \F^n} \prod_{S\s \lbrack d\rbrack} \cC^{d-|S|}f(x+\sum_{i\in S} y_i)$$ for any $x\in \F^n$. By averaging over all $x\in \F^n$, it follows that $\bias(T)=\|f\|^{2^d}_{U^d}\geq \e^{2^d}$. Thus, $\arank(T)\leq 2^d\log_{|\F|}(1/\e)$. Note that $2^d\log_{|\F|}(1/\e)\geq 1$. Therefore, by Theorem \ref{analyticpartition} with $r=2^d\log_{|\F|}(1/\e)$, we get
\begin{equation} \label{eqnprank}
\prank(T)\leq (c\cdot 2^d \cdot \log(1/\e))^{c'(d)}.
\end{equation}
Note that $T(y_1,\dots,y_d)=D_{y_1}\dots D_{y_d}P(x)$ where $D_yg(x)=g(x+y)-g(x)$. Thus, by Taylor's approximation theorem, since $d<\text{char}(\F)$, we get $P(x)=\frac{1}{d!}D_x \dots D_x P(0)+W(x)=\frac{1}{d!}T(x,\dots,x)+W(x)$ for some polynomial $W$ of degree at most $d-1$.

By equation (\ref{eqnprank}), $T$ can be written as a sum of at most $(c\cdot 2^d \cdot \log(1/\e))^{c'(d)}$ tensors of partition rank 1. Hence, $\frac{1}{d!}T(x,\dots,x)$ can be written as a sum of at most $(c\cdot 2^d \cdot \log(1/\e))^{c'(d)}$ expressions of the form $Q(x)R(x)$ where $Q,R$ are polynomials of degree at most $d-1$ each. Thus, $P-W$ has rank at most $(c\cdot 2^d \cdot \log(1/\e))^{c'(d)}$, and therefore $P$ has rank at most $$(c\cdot 2^d \cdot \log(1/\e))^{c'(d)}+1.$$

We remark that the proof of the main result of this paper follows the strategy introduced by the author in \cite{janzerrank}, but the argument is improved locally at a few places.

\section{The proof of Theorem \ref{analyticpartition}}

\subsection{Notation and preliminaries}

In the rest of the paper, we identify $V_i$ with $\F^{n_i}$. Thus, the set of all tensors $V_1\times \dots \times V_d\rightarrow \F$ is the tensor product $\F^{n_1}\otimes \dots \otimes \F^{n_d}$, which will be denoted by $\cG$ throughout this section. Also, $\cB$ will always stand for the multiset $\{u_1\otimes\dots\otimes u_d:u_i\in \F^{n_i} \text{ for all } i\}$. The elements of $\cB$ will be called \emph{pure tensors}. Note that $\cG=\F^{n_1}\otimes\dots\otimes\F^{n_d}$ can be viewed as the set of $d$-dimensional $(n_1,\dots,n_d)$-arrays over $\F$ which in turn can be viewed as $\F^{n_1n_2\dots n_d}$, equipped with the entry-wise dot product.

For $I\s \lbrack d\rbrack$, we write $\F^{I}$ for $\bigotimes_{i\in I} \F^{n_i}$ so that we naturally have $\cG=\F^{I}\otimes \F^{I^c}$, where $I^c$ always denotes $\lbrack d\rbrack \setminus I$.

If $r\in \F^{\lbrack d\rbrack}=\cG$ and $s\in \F^{\lbrack k\rbrack}$ (for some $k\leq d$), then we define $rs$ to be the tensor in $\F^{\lbrack k+1,d\rbrack}$ with coordinates $(rs)_{i_{k+1},\dots,i_d}=\sum_{i_1\leq n_1,\dots,i_k\leq n_k} r_{i_1,\dots,i_d}s_{i_1,\dots,i_k}$. If $k=d$, then $rs$ is the same as the entry-wise dot product $r.s$. Also, note that viewing $r$ as a $d$-multilinear map $R:\F^{n_1}\times \dots \times \F^{n_d}\rightarrow \F$, we have $R(v^1,\dots,v^d)=\sum_{i_1\leq n_i,\dots,i_d\leq n_d} r_{i_1,\dots,i_d}v^1_{i_1}\dots v^d_{i_d}=r(v^1\otimes \dots \otimes v^d)$.

Finally, we use a non-standard notation and write $kB$ to mean the set of elements of $\cG$ which can be written as a sum of \textit{at most} $k$ elements of $B$, where $B$ is some fixed (multi)subset of $\cG$, and similarly, we write $kB-lB$ for the set of elements that can be obtained by adding at most $k$ members and subtracting at most $l$ members of $B$.

We will use the next result several times in our proofs. It is a version of Bogolyubov's lemma, due to Sanders.

\begin{lemma}[Sanders \cite{sanders}] \label{lemmabogolyubov}
	There is an absolute constant $C$ with the following property. Let $A$ be a subset of $\F^n$ with $|A|\geq \d |\F^n|$. Then $2A-2A$ contains a subspace of $\F^{n}$ of codimension at most $C(\log (1/\d))^4$.
\end{lemma}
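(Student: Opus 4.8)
\emph{Proof proposal.} This is Sanders' polylogarithmic strengthening of Bogolyubov's lemma, and I would prove it by combining the classical Fourier-analytic argument with the Croot--Sisask almost periodicity method and a density-increment iteration. Normalise the Fourier transform on $\F^n$ so that $\widehat{1_A}(0)=\d:=|A|/|\F^n|$ and $\sum_{\xi}|\widehat{1_A}(\xi)|^2=\d$, and note the identity $A+A-A-A=\operatorname{supp}\big(1_A*1_A*1_{-A}*1_{-A}\big)$, where $A+A-A-A\s 2A-2A$; so it suffices to find a subspace $V$ of low codimension on which $g:=1_A*1_A*1_{-A}*1_{-A}$ is everywhere positive. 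Since $\widehat{g}=|\widehat{1_A}|^4\ge 0$, whenever $x$ lies in the annihilator of the span of the large spectrum $\Lambda_\rho:=\{\xi:|\widehat{1_A}(\xi)|\ge\rho\d\}$ we get
\[
g(x)\;\ge\;\sum_{\xi\in\Lambda_\rho}|\widehat{1_A}(\xi)|^4\;-\;\max_{\xi\notin\Lambda_\rho}|\widehat{1_A}(\xi)|^2\sum_{\xi}|\widehat{1_A}(\xi)|^2\;\ge\;\d^4-\rho^2\d^3,
\]
which is positive once $\rho<\sqrt{\d}$. Taking $\rho=\sqrt{\d}/2$ and bounding $|\Lambda_\rho|\le 4/\d^2$ by Parseval already yields the classical estimate $\codim V\le 4/\d^2$; the whole difficulty is to replace $4/\d^2$ by a polylogarithmic quantity.

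The first improvement is Chang's theorem: $\Lambda_\rho$ is contained in a subspace of dimension $O(\rho^{-2}\log(1/\d))$ (proved via Rudin's inequality and a dissociativity argument). If the threshold $\rho$ could be taken to be an absolute constant this would finish the proof with codimension $O(\log(1/\d))$; the obstruction is exactly that the positivity computation above forces $\rho$ as small as $\sqrt{\d}$, where Chang's bound degrades to $O(\d^{-1}\log(1/\d))$. The role of the Croot--Sisask almost periodicity lemma is to restore an essentially constant threshold at an affordable price, and I would state and prove it by the probabilistic sampling argument: writing $\mu_A*1_A=\E_{a\in A}\tau_{-a}1_A$, an empirical average of $k\approx m\epsilon^{-2}$ independent random translates $\tau_{-a}1_A$ concentrates around $\mu_A*1_A$ in $L^{2m}(\F^n)$ by a Marcinkiewicz--Zygmund-type inequality; pigeonholing the samples (there are at most $(1/\d)^{k}$ of them up to translation) produces a set $T\s\F^n$ with $|T|\ge\d^{\,Cm\epsilon^{-2}}|\F^n|$ every element of which is an $(\epsilon,L^{2m})$-approximate period of $\mu_A*1_A$ (hence, on passing to sums of boundedly many elements, of $g$ up to a controlled loss).

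A single application of Croot--Sisask followed by Chang is still not enough -- a direct computation shows it costs a factor $\d^{-O(1)}$ in the exponent -- so, following Sanders, I would run a density-increment iteration. Maintain a subspace $V_i\le\F^n$ of codimension $r_i$ and a set $A_i\s V_i$ that is a translate of $A\cap(V_i+w_i)$, of density $\d_i$ in $V_i$, starting from $V_0=\F^n$, $A_0=A$, $\d_0=\d$. At each stage either $A_i+A_i-A_i-A_i$ contains a subspace $U$ of $V_i$ of codimension $\operatorname{polylog}(1/\d)$ -- and then we are done, because the two $+$ and two $-$ translations cancel, so $U\s A_i+A_i-A_i-A_i\s A+A-A-A\s 2A-2A$ -- or it does not, in which case the Croot--Sisask/Chang machinery applied to $A_i$ inside $V_i$ (with a constant $\epsilon$ and $m\approx\log(1/\d)$, so that the resulting structured set has density $\exp(-O(\log^2(1/\d)))$) yields a subspace $V_{i+1}<V_i$ with $\codim_{V_i}(V_{i+1})=\operatorname{polylog}(1/\d)$ on which $A_i$ has density at least $2\d_i$. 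Since the density cannot exceed $1$, the process halts after at most $\log_2(1/\d)$ rounds, and the codimensions accumulate to $\log(1/\d)\cdot\operatorname{polylog}(1/\d)$; a careful accounting of how the error parameter and the density degrade through the iteration pins the exponent at $4$.

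The main obstacle is precisely this interface between Croot--Sisask and Chang inside the iteration and the bookkeeping of the exponents: getting almost periods is cheap, but keeping the final exponent equal to $4$ requires running the $L^{2m}$ version with $m$ growing like $\log(1/\d)$ without a further loss, checking that sums of a bounded number of approximate periods remain comparably good, and -- most delicately -- extracting from a failure of the dichotomy an honest density increment on a subspace of only polylogarithmic codimension rather than the $\d^{-O(1)}$ a naive argument gives. Since the statement is quoted verbatim from Sanders \cite{sanders}, it is in any case enough for us to invoke it.
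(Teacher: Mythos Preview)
The paper does not prove this lemma at all: it is stated with attribution to Sanders \cite{sanders} and used as a black box throughout. Your final sentence already acknowledges this, and that is exactly the paper's approach --- simply invoke the result.

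Your sketch of Sanders' argument (Croot--Sisask almost periodicity combined with Chang's spectral bound inside a density-increment iteration) is a faithful outline of how the cited result is actually proved, so nothing you wrote is wrong. But for the purposes of matching the paper, the entire sketch is superfluous: the paper treats the lemma as an imported tool and offers no argument of its own, so the ``proof'' here is just the citation.
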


Throughout the paper, $C$ stands for the constant appearing in the previous lemma. Clearly we may assume that $C\geq 1$. Logarithms are base 2.

\subsection{The main lemma and some consequences}

Theorem \ref{analyticpartition} will follow easily from the next lemma, which is the main technical result of this paper. See \cite{gowersjanzer} for an application of a qualitative version of this lemma.

\begin{lemma} \label{mainlemma}
	Let $d\geq 1$ be an integer and let $\d\leq 1/2$. Let $f_1(d)=2^{3^{d+3}}$, $f_2(d)=2^{-3^{d+3}}$ and $G(d,\d,\F)=((\log |\F|)c_1(d)(\log 1/\d))^{c_2(d)}$ where $c_1(d)=C\cdot 2^{3^{d+6}}$ and $c_2(d)=4^{d^{d}}$. If $\cB'\subset \cB$ is a multiset such that $|\cB'|\geq \d |\cB|$, then there exists a multiset $Q$ whose elements are pure tensors chosen from $f_1(d)\cB'-f_1(d)\cB'$ (but with arbitrary multiplicity) with the following property. The set of arrays $r\in \cG$ with $r.q=0$ for at least $(1-f_2(d))|Q|$ choices $q\in Q$ is contained in $\sum_{I\subset \lbrack d\rbrack, I\neq \emptyset} V_I\otimes \F^{I^c}$ for subspaces $V_I\subset \F^{I}$ of dimension at most $G(d,\d,\F)$.
\end{lemma}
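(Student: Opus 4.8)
## Proof Strategy for Lemma \ref{mainlemma}

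The plan is to proceed by induction on $d$. For the base case $d=1$, a pure tensor is just a vector $u \in \F^{n_1}$, and $\cB$ is all of $\F^{n_1}$. If $\cB' \subset \F^{n_1}$ has density $\d$, then by Lemma \ref{lemmabogolyubov} the set $2\cB' - 2\cB'$ contains a subspace $W$ of codimension at most $C(\log 1/\d)^4$. I would take $Q$ to be a basis (or spanning set) of the annihilator $W^{\perp}$, which lives inside $2\cB'-2\cB' \subset f_1(1)\cB' - f_1(1)\cB'$; then any $r$ orthogonal to \emph{all} of $Q$ (in particular to a $(1-f_2(1))$-fraction, once $|Q|$ is small enough that this forces all of them, or by a cleaner argument choosing $Q$ with repetition) lies in $W^{\perp\perp} = W$... wait — I want the conclusion $r \in V_1 \otimes \F^\emptyset = V_1$ with $\dim V_1$ small, so instead I take $V_1 = W^\perp$, which has dimension $= \codim W \le C(\log 1/\d)^4 \le G(1,\d,\F)$, and note that $r.q = 0$ for all $q \in Q$ spanning $W$ forces $r \in W^\perp = V_1$. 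The tolerance $f_2(1)$ is handled by repeating each basis element many times so that missing an $f_2$-fraction still leaves every direction covered.

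For the inductive step, I would use the standard ``slicing'' approach from \cite{janzerrank}. Given $\cB' \subset \cB$ of density $\d$, write each pure tensor as $u_1 \otimes w$ with $w \in \cB_{d-1}$ a pure tensor of order $d-1$. A counting/pigeonhole argument produces many values of the first coordinate direction for which the corresponding slice $\cB'_w \subset \F^{n_1}$ (or the order-$(d-1)$ projection) is dense; more precisely, one passes to a structured sub-collection where, along one coordinate, the fibres are dense subsets of $\F^{n_1}$, and in the remaining $d-1$ coordinates we have a dense sub-multiset of $\cB_{d-1}$. Apply Lemma \ref{lemmabogolyubov} in the first coordinate to capture a codimension-$O((\log 1/\d)^4)$ subspace inside $2\cB'-2\cB'$ restricted appropriately, and apply the induction hypothesis to the order-$(d-1)$ part to obtain a multiset $Q' \subset f_1(d-1)\cB''-f_1(d-1)\cB''$ controlling order-$(d-1)$ arrays by subspaces of dimension $\le G(d-1,\d',\F)$. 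The multiset $Q$ is then built by tensoring elements of the first-coordinate annihilator with elements of $Q'$, checking that these products are pure tensors lying in $f_1(d)\cB'-f_1(d)\cB'$ (this is where the geometric growth of $f_1$ is used — a product of something in $a\cB'-a\cB'$ with something in $b\cB'-b\cB'$ lands in roughly $ab(\cB'-\cB')$-type sets).

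The analytic heart is the following: if $r \in \cG = \F^{n_1} \otimes \F^{\{2,\dots,d\}}$ satisfies $r.q = 0$ for a $(1-f_2(d))$-fraction of $q \in Q$, I must deduce $r \in \sum_{\emptyset \ne I} V_I \otimes \F^{I^c}$ with each $\dim V_I \le G(d,\d,\F)$. Writing $r = \sum_j e^j \otimes r_j$ with $r_j \in \F^{\{2,\dots,d\}}$, the orthogonality to the tensored multiset $Q$ says (for most first-coordinate test vectors $p$ from the annihilator and most $q' \in Q'$) that $(\sum_j p_j r_j).q' = 0$, i.e. the order-$(d-1)$ array $\sum_j p_j r_j$ is controlled by the induction hypothesis, hence lies in a fixed bounded-dimension subspace $W' = \sum_{\emptyset \ne J \subset \{2,\dots,d\}} V_J \otimes \F^{J^c}$. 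Averaging/linear algebra over the choices of $p$ (which span a codimension-$O((\log 1/\d)^4)$ subspace $U$ of $\F^{n_1}$) then forces: either the ``$U$-part'' of $r$ maps into $W'$ (giving the terms with $1 \notin I$), or $r$ has small support in the first coordinate modulo $U^\perp$ (giving a term $V_1 \otimes \F^{\{2,\dots,d\}}$ with $\dim V_1 \le \codim U + \dim W'$). Bookkeeping the dimension: we get a bound like $G(d-1,\d,\F) \cdot (\text{number of subsets}) + C(\log 1/\d)^4$, and the recursion $c_2(d) \approx d \cdot c_2(d-1)$ together with $c_1(d) \approx c_1(d-1)^{O(1)}$ is exactly what produces $c_2(d) = 4^{d^d}$ and the stated $c_1(d)$.

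The main obstacle I anticipate is \emph{not} the soft structure but the careful propagation of the tolerance parameter $f_2$ and the quantitative bookkeeping of how densities degrade: each pigeonhole/slicing step replaces $\d$ by something like $\d^{O(1)}$ or $\d/f_1$, and each application of Sanders' lemma and the induction hypothesis must be fed a density that is still bounded below in terms of the \emph{original} $\d$ only through the allowed $(\log 1/\d)^{c_2(d)}$ factor. Ensuring that the multiset $Q$ can be chosen with enough repetition that a $(1-f_2(d))$-fraction still ``spans everything we need'' — while simultaneously keeping $|Q|$ from blowing up in a way that breaks the membership $Q \subset f_1(d)\cB' - f_1(d)\cB'$ — is the delicate combinatorial point, and is presumably where the improvement over \cite{janzerrank} (replacing a tower bound by a polynomial one) is localized: one must avoid iterating any ``density-inside-density'' step that costs an exponential, instead always reducing density polynomially and only paying the $(\log 1/\d)^4$ of Bogolyubov–Sanders once per level.
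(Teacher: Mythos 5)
There is a genuine gap, and it sits exactly where you wave your hands. First, a smaller but real problem in your base case: repeating each basis element of the subspace $W\s 2\cB'-2\cB'$ many times does \emph{not} handle the tolerance $f_2(1)$. If $\dim W>1/f_2(1)$ (which it typically is, since $\codim W\leq C(\log 1/\d)^4$ while $n_1$ is unbounded), an array $r$ can be orthogonal to all but one basis direction while violating only a fraction $1/\dim W<f_2(1)$ of $Q$; ranging over which direction is missed, such $r$ are not contained in any single subspace of dimension $G(1,\d,\F)$. The fix (and what the paper does) is to take $Q$ to be the \emph{entire} subspace $U\s 2\cB'-2\cB'$: the set $\{q\in U: r.q=0\}$ is a subspace of $U$, so if it contains more than half of $U$ it is all of $U$, forcing $r\in U^{\perp}$.

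The major gap is in your inductive step. Slicing off the first coordinate and applying the induction hypothesis to the slices $\sum_j p_j r_j$ only tells you that, for most $p$ in a bounded-codimension subspace $U\s\F^{n_1}$, the slice lies in a sum $\sum_{J} V_J\otimes\F^{\cdot}$ where the subspaces a priori \emph{depend on the slice and on $r$}. Your sentence ``averaging/linear algebra over the choices of $p$ then forces\dots'' is precisely the step that does not follow: nothing so soft converts these $r$-dependent spaces into subspaces $V_I$ independent of $r$, and moreover your scheme can only produce terms $V_{\{1\}}\otimes\F^{\{2,\dots,d\}}$ and terms with $1\notin I$, whereas the conclusion requires $V_I$ for every non-empty $I\s[d]$. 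The paper's proof is built around exactly this difficulty: it constructs, for each non-empty $I\s[d-1]$ (plus $I=[d]$, handled first by Lemma \ref{focusondeg}), a separate multiset $Q_I=\bigcup_{s\in Q'}s\otimes Q_s$ via nested ``$l$-systems'' (Lemmas \ref{intersect}--\ref{systemspace} and \ref{claim}), processes the subsets in an order $\prec$ compatible with inclusion, insists that $Q'$ be orthogonal to the previously produced spaces $W_J$ ($J\subsetneq I$) so that the component $r_1$ of the decomposition $r=r_1+r_2+r_3+r_4$ is annihilated, and then runs an iterative subspace-saturation argument (the chains $V(j)$ in Lemma \ref{focusondeg} and $Z(j)$, $K(j,s)$, $X(r)$ in Lemma \ref{keylemma}) to replace the $r$-dependent spaces $H_{I^c}(r)$ by $r$-independent ones, one subset $I$ at a time. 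This per-subset processing is also what the exponent bookkeeping reflects: the final exponent is a product $\prod_I 2c_2(|I|)$ over all $2^{d-1}$ subsets, giving $c_2(d)=4^{d^d}$, not the recursion $c_2(d)\approx d\cdot c_2(d-1)$ you posit — a sign that the single-slice scheme you describe is not the argument that proves the stated bound.
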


Throughout the paper, the functions $G,c_1,c_2$ will refer to the functions introduced in the previous lemma. In fact, as $\F$ is fixed, we will write $G(d,\d)$ to mean $G(d,\d,\F)$.

In this subsection we deduce Theorem \ref{analyticpartition} from Lemma \ref{mainlemma}.

%The proof of this lemma goes by induction on $d$. In what follows, we shall prove results conditional on the assumption that Lemma \ref{mainlemma} has been verified for all $d'<d$. Eventually, we will use these results to prove the induction step.

The notion introduced in the next definition is closely related to the partition rank, but will be somewhat more convenient to work with.

\begin{definition}
	Let $k$ be a positive integer. We say that $r\in \cG$ is \emph{$k$-degenerate} if for every $I\s \lbrack d\rbrack,I\neq \emptyset,I\neq \lbrack d\rbrack$, there exists a subspace $H_I\s \F^{I}$ of dimension at most $k$ such that $r\in \sum_{I\subset \lbrack d-1 \rbrack,I\neq \emptyset}H_I\otimes H_{I^c}$.
\end{definition}

If $r\in H_I\otimes \F^{I^c}$ with $\dim(H_I)\leq k$, then $r\in H_I\otimes H_{I^c}$ for some $H_{I^c}\s \F^{I^c}$ of dimension at most $k$. (This follows by writing $r$ as $\sum_{j\leq m} s_j\otimes t_j$ with $\{s_j\}$ a basis for $H_I$ and letting $H_{I^c}$ be the span of all the $t_j$.) Thus, $r$ is $k$-degenerate if and only if $r\in \sum_{I\s \lbrack d-1 \rbrack, I\neq \emptyset} H_I\otimes \F^{I^c}$ for some $H_I\s \F^{I}$ of dimension at most $k$, or equivalently, if and only if $r\in \sum_{I\s \lbrack d-1 \rbrack, I\neq \emptyset} \F^{I}\otimes H_{I^c}$ for some $H_{I^c}\s \F^{I^c}$ of dimension at most $k$. Moreover, note that if $r$ is $k$-degenerate, then $\prank(r)\leq 2^{d-1}k$. This is because if $I\neq \emptyset,I\s \lbrack d-1 \rbrack$ and $w\in H_I\otimes H_{I^c}$ for subspaces $H_I\s \F^{I}$ and $H_{I^c}\s \F^{I^c}$ of dimension at most $k$, then $w=\sum_{i\leq k} s_i\otimes t_i$ for some $s_i\in H_{I}$, $t_i\in H_{I^c}$. But clearly, $s_i\otimes t_i$ has partition rank 1.

\begin{lemma} \label{biastorank}
	Let $\d\leq 1/2$ and $d\geq 2$. Suppose that Lemma \ref{mainlemma} has been proved for $d'=d-1$. Let $r\in \cG$ be such that $r(v_1\otimes \dots \otimes v_{d-1})=0\in \F^{n_d}$ for at least $\d|\F|^{n_1\dots n_{d-1}}$ choices $v_1\in \F^{n_1},\dots,v_{d-1}\in \F^{n_{d-1}}$. Then $r$ is $f$-degenerate for $f=G(d-1,\d)$.
\end{lemma}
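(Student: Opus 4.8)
The plan is to view the hypothesis about $r\in\cG=\F^{n_1}\otimes\dots\otimes\F^{n_d}$ as a statement about the $(d-1)$-tensor obtained by "peeling off" the last coordinate. Concretely, $r$ can be regarded as an element of $\F^{\lbrack d-1\rbrack}\otimes\F^{n_d}$, i.e. as a tuple of $n_d$ tensors of order $d-1$, or equivalently as a single $(d-1)$-tensor with coefficients in the vector space $\F^{n_d}$. For a pure tensor $q=u_1\otimes\dots\otimes u_{d-1}\in\cB_{d-1}$ (the analogue of $\cB$ for order $d-1$), the product $r q$ lies in $\F^{n_d}$, and the hypothesis says precisely that $rq=0$ for at least $\d|\F|^{n_1+\dots+n_{d-1}}$ (I mean $\d$ times the size of the relevant index set — note that the number of pure tensors $u_1\otimes\dots\otimes u_{d-1}$ is $|\F|^{n_1}\cdots|\F|^{n_{d-1}}=|\cB_{d-1}|$) choices of $q\in\cB_{d-1}$.

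**Applying the main lemma in dimension $d-1$.** First I would let $\cB'\subset\cB_{d-1}$ be the multiset of those pure tensors $q$ with $rq=0$; by hypothesis $|\cB'|\geq\d|\cB_{d-1}|$, and $\d\leq 1/2$, so Lemma \ref{mainlemma} for $d'=d-1$ applies. This produces a multiset $Q$ of pure tensors drawn from $f_1(d-1)\cB'-f_1(d-1)\cB'$ such that the set of arrays $s\in\F^{\lbrack d-1\rbrack}$ with $s.q=0$ for at least $(1-f_2(d-1))|Q|$ of the $q\in Q$ lies in $\sum_{\emptyset\neq I\subset\lbrack d-1\rbrack}V_I\otimes\F^{\lbrack d-1\rbrack\setminus I}$ with $\dim V_I\leq G(d-1,\d)$. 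Now the key point: since every $q\in Q$ is a $\pm$-combination of at most $f_1(d-1)$ tensors $q'\in\cB'$, and $rq'=0$ for each such $q'$ by definition of $\cB'$, bilinearity of the pairing gives $rq=0\in\F^{n_d}$ for \emph{every} $q\in Q$. Hence for each coordinate $j\leq n_d$, writing $r=\sum_{j}s^{(j)}\otimes e^j$ with $s^{(j)}\in\F^{\lbrack d-1\rbrack}$, we get $s^{(j)}.q=(rq)_j=0$ for all $q\in Q$; in particular each $s^{(j)}$ satisfies the near-vanishing condition, so $s^{(j)}\in\sum_{\emptyset\neq I\subset\lbrack d-1\rbrack}V_I\otimes\F^{\lbrack d-1\rbrack\setminus I}$.

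**Concluding degeneracy of $r$.** Since every slice $s^{(j)}$ lies in the fixed subspace $W:=\sum_{\emptyset\neq I\subset\lbrack d-1\rbrack}V_I\otimes\F^{\lbrack d-1\rbrack\setminus I}$ of $\F^{\lbrack d-1\rbrack}$, and $r=\sum_j s^{(j)}\otimes e^j$, we conclude $r\in W\otimes\F^{n_d}$, i.e.
\[
r\in\Big(\sum_{\emptyset\neq I\subset\lbrack d-1\rbrack}V_I\otimes\F^{\lbrack d-1\rbrack\setminus I}\Big)\otimes\F^{n_d}
=\sum_{\emptyset\neq I\subset\lbrack d-1\rbrack}V_I\otimes\F^{\lbrack d\rbrack\setminus I}.
\]
Indexing these as subspaces $H_I=V_I\subset\F^I$ for the nonempty $I\subsetneq\lbrack d-1\rbrack$ (and noting $I=\lbrack d-1\rbrack$ can be absorbed or handled since $\lbrack d-1\rbrack\neq\lbrack d\rbrack$), each of dimension at most $G(d-1,\d)=f$, this is exactly the statement that $r$ is $f$-degenerate, per the remark after the definition that $r$ is $k$-degenerate iff $r\in\sum_{\emptyset\neq I\subset\lbrack d-1\rbrack}H_I\otimes\F^{I^c}$ with $\dim H_I\leq k$.

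**Expected obstacle.** The conceptual content is entirely the "pass to slices" observation — that the hypothesis on $r$ gives a single set $\cB'$ of pure tensors annihilating \emph{all} slices simultaneously, so a single application of the main lemma in dimension $d-1$ controls every slice at once. The only place one must be careful is bookkeeping: matching the index set $\lbrack d-1\rbrack$ in the $(d-1)$-dimensional lemma with the first $d-1$ coordinates here, checking that $rq=0$ (not merely near-zero) for $q\in Q$ so the $s^{(j)}$ genuinely lie in the target subspace, and verifying that the resulting subspace decomposition over $\emptyset\neq I\subsetneq\lbrack d-1\rbrack$ matches the definition of $f$-degeneracy. None of these is hard; the main risk is simply an indexing slip.
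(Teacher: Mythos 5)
Your argument is correct and is essentially the paper's own proof: the paper likewise takes $\cD'=\{w\in\cD:rw=0\}$, applies Lemma \ref{mainlemma} for $d'=d-1$ to get $Q\subset f_1(d-1)\cD'-f_1(d-1)\cD'$, observes by linearity that every slice $s_i$ of $r$ (with respect to a basis of $\F^{n_d}$) satisfies $s_i.q=0$ for all $q\in Q$, and concludes $r\in\sum_{I\subset\lbrack d-1\rbrack,\,I\neq\emptyset}V_I\otimes\F^{I^c}$, i.e.\ $r$ is $G(d-1,\d)$-degenerate. Your bookkeeping (including the treatment of $I=\lbrack d-1\rbrack$ and the exact-vanishing of $r.q$ for $q\in Q$) matches the paper's.
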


\begin{proof}
	Write $r=\sum_i s_i\otimes t_i$ where $s_i\in \F^{\lbrack d-1\rbrack}$ and $\{t_i\}_i$ is a basis for $\F^{n_d}$. Let $\cD$ be the multiset $\{u_1\otimes \dots \otimes u_{d-1}:u_1\in \F^{n_1},\dots,u_{d-1}\in\F^{n_{d-1}}\}$ and let $\cD'=\{w\in \cD: rw=0\}$. Since $|\cD'|\geq \d|\cD|$, by Lemma \ref{mainlemma} there is a multiset $Q$ with elements from $2^{3^{d+2}}\cD'-2^{3^{d+2}}\cD'$ such that the set of arrays $r'\in \F^{\lbrack d-1\rbrack}$ with $r'.q=0$ for all choices $q\in Q$ is contained in some $\sum_{I\s \lbrack d-1\rbrack,I\neq \emptyset} V_I\otimes \F^{\lbrack d-1\rbrack \setminus I}$, where $\dim(V_I)\leq G(d-1,\d)$. Note that for every $i$ we have $s_i.w=0$ for all $w\in \cD'$ and so also $s_i.q=0$ for all $q\in Q$. Thus, $r\in \sum_{I\s \lbrack d-1\rbrack,I\neq \emptyset} V_I\otimes \F^{I^c}$.
\end{proof}

Now we are in a position to prove Theorem \ref{analyticpartition} conditional on Lemma \ref{mainlemma}.

\begin{proof}[Proof of Theorem \ref{analyticpartition}]
	Let $T:\F^{n_1}\times \dots \times \F^{n_d}\rightarrow \F$ be an order $d$ tensor with $\arank(T)\leq r$. By Remark \ref{welldefined}, we have $\P_{v_1\in \F^{n_1},\dots,v_{d-1}\in \F^{n_{d-1}}}\lbrack T(v_1,\dots,v_{d-1},x)\equiv 0\rbrack\geq |\F|^{-r}$. Writing $t$ for the element in $\cG$ corresponding to $T$, we get that $t(v_1\otimes \dots \otimes v_{d-1}\otimes x)\equiv 0$ as a function of $x$ for at least $\d|\F|^{n_1\dots n_d}$ choices $v_1\in \F^{n_1},\dots,v_{d-1}\in \F^{n_{d-1}}$, where $\d=|\F|^{-r}$. But $t(v_1\otimes \dots \otimes v_{d-1}\otimes x)=\big(t(v_1\otimes \dots \otimes v_{d-1})\big).x$, so we have $t(v_1\otimes \dots \otimes v_{d-1})=0$ for all these choices of $v_i$. The condition $r\geq 1$ implies $\d\leq 1/2$, therefore by Lemma \ref{biastorank}, $t$ is $f$-degenerate for $f=G(d-1,\d)$. Hence,
	
	\begin{align*} \prank(T)&\leq 2^{d-1}G(d-1,\d) \\ &=2^{d-1}((\log |\F|)\cdot c_1(d-1) \cdot \log (|\F|^r))^{c_2(d-1)} \\
	&=2^{d-1}((\log |\F|)^2\cdot c_1(d-1)\cdot r)^{c_2(d-1)} \\
	&\leq ((\log |\F|)^2\cdot c_1(d)\cdot r)^{c_2(d-1)}
	\end{align*}
	But there exists some absolute constant $c$ such that $c_1(d)^{c_2(d-1)}\leq c^{c_2(d)}$ holds for all $d$. Moreover, $2c_2(d-1)\leq c_2(d)$. Thus, $\prank(T)\leq (c\cdot \log |\F|)^{c_2(d)}\cdot r^{c_2(d)}=(c\cdot \log |\F|)^{c'(d)}\cdot r^{c'(d)}$.
\end{proof}

\subsection{The overview of the proof of Lemma \ref{mainlemma}} \label{subsectionoverviewranklemma}

The proof of the lemma goes by induction on $d$. In what follows, we shall prove results conditional on the assumption that Lemma \ref{mainlemma} has been verified for all $d'<d$. Eventually, we will use these results to prove the induction step.

In this subsection, we give a detailed sketch of the proof in the $d=3$ case. At the end of the subsection, we also briefly sketch the $d>3$ case.

\subsubsection{The high-level outline in the case $d=3$}

We assume that Lemma \ref{mainlemma} has been proven for $d\leq 2$ and use this assumption to show that it holds for $d=3$. We will take $Q=Q_{\{1,2,3\}}\cup Q_{\{1\}}\cup Q_{\{2\}}\cup Q_{\{3\}}$ with elements chosen from $2^{3^{d+3}}\cB'-2^{3^{d+3}}\cB'$ such that the $Q_I$ have roughly equal size. This implies that if for some $r\in \cG$ we have $r.q=0$ for almost all $q\in Q$, then $r.q=0$ holds for almost all $q\in Q_I$ for every $I=\{1\},\{2\},\{3\},\{1,2,3\}$. We define $Q_{\{1,2,3\}}$ first, in a way that if $r.q=0$ for almost all $q\in Q_{\{1,2,3\}}$, then $r=x+y$ where $x\in V_{\{1,2,3\}}$ for a vector space $V_{\{1,2,3\}}$ which is independent of $r$ and have small dimension, and $y$ has small partition rank. This already implies that any array $r\in \cG$ with $r.q=0$ for almost all $q\in Q$ is contained in $V_{\{1,2,3\}}+\F^{n_1}\otimes H_{\{2,3\}}(r)+\F^{n_2}\otimes H_{\{1,3\}}(r)+\F^{n_3}\otimes H_{\{1,2\}}(r)$ for some subspaces $H_I(r)\s \F^{I}$ depending on $r$ and of small dimension. We then find $Q_{\{1\}}$ such that if $r\in V_{\{1,2,3\}}+\F^{n_1}\otimes H_{\{2,3\}}(r)+\F^{n_2}\otimes H_{\{1,3\}}(r)+\F^{n_3}\otimes H_{\{1,2\}}(r)$ has $r.q=0$ for almost all $q\in Q_{\{1\}}$, then $r\in V_{\{1,2,3\}}+V_{\{1\}}\otimes \F^{\{2,3\}}+\F^{n_1}\otimes V_{\{2,3\}}+\F^{n_2}\otimes K_{\{1,3\}}(r)+\F^{n_3}\otimes K_{\{1,2\}}(r)$, where $V_{\{1\}}\s \F^{n_1}$ and $V_{\{2,3\}}\s \F^{\{2,3\}}$ are subspaces independent of $r$ and have small dimension, and $K_I(r)\s \F^{I}$ are subspaces of small dimension (although quite a bit larger than $\dim(H_I(r))$). Then we find $Q_{\{2\}}$ such that if $r\in V_{\{1,2,3\}}+V_{\{1\}}\otimes \F^{\{2,3\}}+\F^{n_1}\otimes V_{\{2,3\}}+\F^{n_2}\otimes K_{\{1,3\}}(r)+\F^{n_3}\otimes K_{\{1,2\}}(r)$ has $r.q=0$ for almost all $q\in Q_{\{2\}}$, then $r\in V_{\{1,2,3\}}+V_{\{1\}}\otimes \F^{\{2,3\}}+\F^{n_1}\otimes V_{\{2,3\}}+V_{\{2\}}\otimes \F^{\{1,3\}}+\F^{n_2}\otimes V_{\{1,3\}}+\F^{n_3}\otimes L_{\{1,2\}}(r)$, where $V_{\{2\}}\s \F^{n_2}$ and $V_{\{1,3\}}\s \F^{\{1,3\}}$ are subspaces independent of $r$ and have small dimension, and $L_{\{1,2\}}(r)\s \F^{\{1,2\}}$ is a subspace of small dimension. Finally, we find $Q_{\{3\}}$ such that if $r\in V_{\{1,2,3\}}+V_{\{1\}}\otimes \F^{\{2,3\}}+\F^{n_1}\otimes V_{\{2,3\}}+V_{\{2\}}\otimes \F^{\{1,3\}}+\F^{n_2}\otimes V_{\{1,3\}}+\F^{n_3}\otimes L_{\{1,2\}}(r)$ has $r.q=0$ for almost all $q\in Q_{\{3\}}$, then $r\in V_{\{1,2,3\}}+V_{\{1\}}\otimes \F^{\{2,3\}}+\F^{n_1}\otimes V_{\{2,3\}}+V_{\{2\}}\otimes \F^{\{1,3\}}+\F^{n_2}\otimes V_{\{1,3\}}+V_{\{3\}}\otimes \F^{\{1,2\}}+ \F^{n_3}\otimes V_{\{1,2\}}$, where $V_{\{3\}}\s \F^{n_3}$ and $V_{\{1,2\}}\s \F^{\{1,2\}}$ are subspaces independent of $r$ and have small dimension.

How will we find $Q_{\{1,2,3\}},Q_{\{1\}},Q_{\{2\}}$ and $Q_{\{3\}}$? In this outline we will only explain how to find $Q_{\{2\}}$ (but finding $Q_{\{1\}}$ and $Q_{\{3\}}$ is very similar). We take $Q_{\{2\}}=\bigcup_{u\in U} u\otimes Q_u$ where $U\s \F^{n_2}$ is a subspace of low codimension, and for each $u\in U$, $Q_u\s \F^{\{1,3\}}$ is a multiset consisting of pure tensors such that if for some $x\in \F^{\{1,3\}}$ we have $x.t=0$ for almost all $t\in Q_u$, then $x\in W_{\{1,3\}}(u)+\F^{n_1}\otimes W_{\{3\}}(u)+W_{\{1\}}(u)\otimes \F^{n_3}$ for some subspaces $W_I(u)\s \F^{I}$ not depending on $x$ and of small dimension. Let us call a $Q_u$ with this property \emph{forcing}. We will also make sure that all the $Q_u$ have roughly the same size.

\subsubsection{Why does this $Q_{\{2\}}$ work?} \label{subsubwhyworks}

In what follows, we will sketch why this choice is suitable. %(Of course, later we will need to prove that there exists such $Q_{\{2\}}$ inside $2^{3^{d+3}}\cB'-2^{3^{d+3}}\cB'$.)
We remark that in the general case this is done in Lemma \ref{keylemma}. Let $R$ consist of those $$r\in V_{\{1,2,3\}}+V_{\{1\}}\otimes \F^{\{2,3\}}+\F^{n_1}\otimes V_{\{2,3\}}+\F^{n_2}\otimes K_{\{1,3\}}(r)+\F^{n_3}\otimes K_{\{1,2\}}(r)$$ such that $r.q=0$ for almost all $q\in Q_{\{2\}}$. Let $r\in R$. Write $r=r_2+r_3+r_4$ where $$r_2\in V_{\{1\}}\otimes \F^{\{2,3\}}+\F^{n_1}\otimes V_{\{2,3\}}+\F^{n_3}\otimes K_{\{1,2\}}(r), \hspace{3mm} r_3\in V_{\{1,2,3\}}, \hspace{3mm} r_4\in \F^{n_2}\otimes K_{\{1,3\}}(r).$$ It is enough to prove that
\begin{equation}
r_4\in V_{\{2\}}\otimes \F^{\{1,3\}}+\F^{n_2}\otimes V_{\{1,3\}}+\F^{n_3}\otimes L'_{\{1,2\}}(r) \label{eqnr4}
\end{equation} for some small subspaces $V_{\{2\}}\s \F^{n_2}$, $V_{\{1,3\}}\s \F^{\{1,3\}}$ and $L'_{\{1,2\}}(r)\s \F^{\{1,2\}}$ (in fact, we will be able to take $V_{\{2\}}=U^{\perp}$).

First note that $r_2u$ has small (partition) rank for every $u\in U$. Indeed, $r_2u\in V_{\{1\}}\otimes \F^{n_3}+\F^{n_1}\otimes V_{\{2,3\}}u+\F^{n_3}\otimes K_{\{1,2\}}(r)u$, where, for a vector space $L$ of tensors, $Lu$ denotes the space $\{su: s\in L\}$.

Moreover, since the $Q_u$ all have roughly the same size, for almost every $u\in U$ we have that $r.(u\otimes t)=0$ holds for almost every $t\in Q_u$. But $r.(u\otimes t)=(ru).t$, therefore as $Q_u$ is forcing, it follows that for any such $u$ $$ru\in W_{\{1,3\}}(u)+\F^{n_1}\otimes W_{\{3\}}(u)+W_{\{1\}}(u)\otimes \F^{n_3}$$ for some subspaces $W_I(u)\s \F^{I}$ not depending on $r$ and of small dimension.  Since any element of $\F^{n_1}\otimes W_{\{3\}}(u)+W_{\{1\}}(u)\otimes \F^{n_3}$ has small partition rank, it follows that for almost every $u\in U$,
\begin{equation}
r_4u=ru-r_2u-r_3u\in W_{\{1,3\}}(u)+V_{\{1,2,3\}}u+s(u) \label{eqnclosetospace}
\end{equation} where $s(u)$ is a tensor of small partition rank.

Define a sequence $0=Z(0)\s Z(1)\s \dots \s Z(m)\s \F^{\{1,3\}}$ of subspaces recursively as follows. Given $Z(j)$, if there is some $r\in R$ such that $r_4u$ is far from $Z(j)$ for many $u\in U$, then set $Z(j+1)=Z(j)+K_{1,3}(r)$. What we mean by $r_4u$ being far from $Z(j)$ is that there is no $z\in Z(j)$ such that $r_4u-z$ has small partition rank. For suitably chosen parameters, one can show that this procedure cannot go on for too long, ie. that for some not too large $m$ we have that for every $r\in R$, for almost all $u\in U$ there is some $z\in Z(m)$ with $r_4u-z$ having small partition rank.

Now let $r\in R$. Let $X(r)$ be the set consisting of those $x\in K_{\{1,3\}}(r)$ which are close to $Z(m)$. Then $r_4u\in X(r)$ for almost every $u\in U$. Let $t_1,\dots,t_{\a}$ be a maximal linearly independent subset of $X(r)$ and extend it to a basis $t_1,\dots,t_{\a},t'_1,\dots,t'_{\beta}$ for $K_{\{1,3\}}(r)$. Now if a linear combination of $t_1,\dots,t_{\a},t'_1,\dots,t'_{\beta}$ is in $X(r)$, then the coefficients of $t'_1,\dots,t'_{\beta}$ are all zero. Write $r_4=\sum_{i\leq \a}s_i\otimes t_i+\sum_{j\leq \beta}s'_j\otimes t'_j$ for some $s_i,s'_j\in \F^{n_2}$. Since $r_4u\in X(r)$ for almost all $u\in U$, we have, for all $j$, that $s'_j.u=0$ for almost all $u\in U$. Since these hold for more than half of $u\in U$, we obtain $s'_j\in U^{\perp}$ for every $j$, therefore $\sum_{j\leq \beta} s'_j\otimes t'_j\in U^{\perp}\otimes \F^{\{1,3\}}$.

Since $t_i\in X(r)$ for every $i$, we may choose $z_i\in Z(m)$ such that $t_i=z_i+y_i$ where $y_i\in \F^{\{1,3\}}$ has small partition rank. Now $\sum_{i\leq \alpha} s_i\otimes t_i\in \F^{n_2}\otimes Z(m)+\sum_{i\leq \alpha} s_i\otimes y_i$. Moreover, as $\alpha$ is small and each $y_i$ has small partition rank, we have $\sum_{i\leq \alpha} s_i\otimes y_i\in L'_{\{1,2\}}(r)\otimes \F^{n_3}$ for some small $L'_{\{1,2\}}(r)\s \F^{\{1,2\}}$. So we have proved (\ref{eqnr4}) with $V_{\{2\}}=U^{\perp}$ and $V_{\{1,3\}}=Z(m)$.

\subsubsection{Why can we find such a $Q_{\{2\}}$ inside $2^{3^{d+3}}\cB'-2^{3^{d+3}}\cB'$?} \label{subsubhowtofind}

Now we describe why there must exist $Q_{\{2\}}$ with elements chosen from $2^{3^{3+3}}\cB'-2^{3^{3+3}}\cB'$ and having the required properties. We remark that in the general case this is done in Lemma \ref{claim}. We want to find a subspace $U\s \F^{n_2}$ of low codimension, and forcing multisets $Q_u\s \F^{\{1,3\}}$ ($u\in U$) consisting of pure tensors such that for every $u\in U$, $u\otimes Q_u\s 2^{3^{3+3}}\cB'-2^{3^{3+3}}\cB'$. Let $\cD$ be the multiset $\{v\otimes w: v\in \F^{n_1}, w\in \F^{n_3}\}$. Notice that if some set $R$ is dense in $\cD$, then by the induction hypothesis we can find a forcing set in $2^{3^{2+3}}R-2^{3^{2+3}}R$ consisting of pure tensors. Therefore it is enough to find a low codimensional subspace $U$ and dense sets $R_u\s \cD$ (for every $u\in U$) such that $u\otimes R_u\s 32\cB'-32\cB'$. As $\cB'$ is dense in $\cB$, we have a dense subset $S\s \F^{n_2}$ and dense subsets $T_s\s \cD$ ($s\in S$) such that $s\otimes T_s\s \cB'$ for every $s\in S$. By Bogolyubov's lemma (Lemma \ref{lemmabogolyubov}), there is a low codimensional subspace $U$ contained in $2S-2S$. To establish the existence of a dense $R_u\s \cD$ with $u\otimes R_u\s 32\cB'-32\cB'$ for every $u\in U$, it is enough to prove the following lemma.

\begin{lemma} \label{sumdense}
	Let $T_1,T_2,T_3,T_4$ be dense subsets of $\cD$. Then $\cD\cap \bigcap_{i\leq 4}(8T_i-8T_i)$ is dense in $\cD$.
\end{lemma}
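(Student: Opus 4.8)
The plan is to run, for each index $i\le 4$ separately, two applications of Bogolyubov's lemma (Lemma \ref{lemmabogolyubov}) --- one ``inside a single column'' of $T_i$ and one on the set of good columns --- and only combine the four sets at the very end. Identify $\cD$ with the set of pairs $(v,w)\in\F^{n_1}\times\F^{n_3}$, so that a (multi)subset of $\cD$ is the same as a (multi)set of such pairs, and for $i\le 4$ and $v\in\F^{n_1}$ write $T_i^{(v)}=\{w\in\F^{n_3}:v\otimes w\in T_i\}$. Say $|T_i|\ge\d|\cD|$ for each $i$. A routine averaging argument shows that the set $A_i$ of those $v$ for which $|T_i^{(v)}|\ge\frac{\d}{2}|\F^{n_3}|$ satisfies $|A_i|\ge\frac{\d}{2}|\F^{n_1}|$. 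Set $k:=C(\log(2/\d))^4$. The naive idea of finding a single column $v$ that is simultaneously good for all four $T_i$ fails, because the sets $A_i$ need not have common density and could even be disjoint; the role of the second Bogolyubov application is precisely to replace ``good columns of $T_i$'' by a genuine subspace, which can then be intersected over $i$.

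First I would treat a single $T_i$. For $v\in A_i$, Lemma \ref{lemmabogolyubov} gives a subspace $W_i(v)\subseteq\F^{n_3}$ of codimension at most $k$ with $W_i(v)\subseteq 2T_i^{(v)}-2T_i^{(v)}$; tensoring a representation $w=w_1+w_2-w_3-w_4$ (with $w_j\in T_i^{(v)}$) with $v$ shows $v\otimes w\in 2T_i-2T_i$ for every $w\in W_i(v)$. Then I would apply Lemma \ref{lemmabogolyubov} a second time, now to $A_i\subseteq\F^{n_1}$, obtaining a subspace $U_i\subseteq\F^{n_1}$ of codimension at most $k$ with $U_i\subseteq 2A_i-2A_i$. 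Fixing for each $v\in U_i$ a representation $v=\sum_j\pm a_j$ with at most four terms, all $a_j\in A_i$, and putting $\widetilde W_i(v):=\bigcap_j W_i(a_j)$ (codimension at most $4k$), one obtains: for $v\in U_i$ and $w\in\widetilde W_i(v)$ each $a_j\otimes w$ lies in $2T_i-2T_i$, hence $v\otimes w=\sum_j\pm(a_j\otimes w)$ lies in $8T_i-8T_i$ (expanding, one finds at most eight summands from $T_i$ on each side; this is exactly where the factor $8$ is used).

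Finally I would intersect over $i$. The subspace $U:=\bigcap_{i\le 4}U_i\subseteq\F^{n_1}$ has codimension at most $4k$, and for each $v\in U$ the subspace $\widetilde W(v):=\bigcap_{i\le 4}\widetilde W_i(v)\subseteq\F^{n_3}$ has codimension at most $16k$; by the previous paragraph $v\otimes w\in\cD\cap\bigcap_{i\le 4}(8T_i-8T_i)$ whenever $v\in U$ and $w\in\widetilde W(v)$. Counting pairs then gives $|\cD\cap\bigcap_{i\le 4}(8T_i-8T_i)|\ge|U|\cdot\min_{v\in U}|\widetilde W(v)|\ge|\F^{n_1}||\F^{n_3}|\,|\F|^{-20k}=|\F|^{-20k}|\cD|$, the required density bound (with $k=C(\log(2/\d))^4$). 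The main obstacle, as indicated, is the one already flagged: dense sets cannot be intersected but bounded-codimension subspaces can, so the structure of the proof is forced to pass through Bogolyubov at both the column level and the level of good columns; apart from that, the only thing to watch is the summand bookkeeping, so that the count stays within the factor $8$ here (and, in the application to $\cB'$ that follows, within $32$).
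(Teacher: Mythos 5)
Your proof is correct and follows essentially the same route as the paper, which factors the argument through Lemma \ref{find2dimsystem} (averaging to get good columns, Bogolyubov once inside each good column and once on the set of good columns, then writing $v=b_1+b_2-b_3-b_4$ and intersecting the four column subspaces to land in $8T_i-8T_i$) and Lemma \ref{intersect2dimsystems} (intersecting the four resulting systems). You have simply inlined those two lemmas and made the codimension bookkeeping explicit, which matches the paper's intent.
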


Indeed, once we have this lemma, it follows that for any $s_1,s_2,s_3,s_4\in S$, the set $\cD\cap \bigcap_{i\leq 4} (8T_{s_i}-8T_{s_i})$ is dense in $\cD$. But if $u\in U$, then we can write $u=s_1+s_2-s_3-s_4$ for some $s_i\in S$, and then $u\otimes \bigcap_{i\leq 4} (8T_{s_i}-8T_{s_i})\s s_1\otimes \bigcap_{i\leq 4} (8T_{s_i}-8T_{s_i})+ s_2\otimes \bigcap_{i\leq 4} (8T_{s_i}-8T_{s_i})-s_3\otimes \bigcap_{i\leq 4} (8T_{s_i}-8T_{s_i})-s_4\otimes \bigcap_{i\leq 4} (8T_{s_i}-8T_{s_i})\s 32\cB'-32\cB'$.

Lemma \ref{sumdense} follows easily from the next two lemmas.

\begin{lemma} \label{find2dimsystem}
	Let $A$ be a dense subset of $\cD$. Then there exist a dense subspace $V\s \F^{n_1}$ and for each $v\in V$ a dense subspace $W_v\s \F^{n_3}$ such that $v\otimes W_v\s 8A-8A$ for every $v\in V$.
\end{lemma}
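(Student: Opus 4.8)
The plan is a two-step application of Bogolyubov's lemma in the form of Lemma~\ref{lemmabogolyubov}: first in the $\F^{n_1}$-coordinate to produce $V$, and then, for each relevant slice, in the $\F^{n_3}$-coordinate to produce the $W_v$. Throughout, identify $\cD$ with $\F^{n_1}\times\F^{n_1}$ — rather, with $\F^{n_1}\times\F^{n_3}$ — so that $A$ becomes a subset of $\F^{n_1}\times\F^{n_3}$ of density $\d$, say. For $v\in\F^{n_1}$ write $A_v=\{w\in\F^{n_3}:v\otimes w\in A\}$.

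First I would pass to the ``dense rows''. By averaging (Fubini together with Markov's inequality), the set $S=\{v\in\F^{n_1}:|A_v|\ge\tfrac12\d|\F^{n_3}|\}$ satisfies $|S|\ge\tfrac12\d|\F^{n_1}|$. Applying Lemma~\ref{lemmabogolyubov} to $S$ yields a subspace $V\s\F^{n_1}$ with $V\s 2S-2S$ and $\codim V\le C(\log(2/\d))^4$; in particular $V$ is dense. Next, fix $v\in V$ and write $v=s_1+s_2-s_3-s_4$ with $s_1,s_2,s_3,s_4\in S$. Each $A_{s_i}$ is dense in $\F^{n_3}$, so a second application of Lemma~\ref{lemmabogolyubov} gives subspaces $W^{(i)}\s\F^{n_3}$ with $W^{(i)}\s 2A_{s_i}-2A_{s_i}$ and $\codim W^{(i)}\le C(\log(2/\d))^4$. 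I would then set $W_v=W^{(1)}\cap W^{(2)}\cap W^{(3)}\cap W^{(4)}$, a subspace of $\F^{n_3}$ of codimension at most $4C(\log(2/\d))^4$, hence dense; it depends only on $v$ (via the chosen representation $v=s_1+s_2-s_3-s_4$), which is all the statement requires.

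It remains to verify $v\otimes W_v\s 8A-8A$. Let $w\in W_v$. For each $i$ we have $w\in 2A_{s_i}-2A_{s_i}$, so $w=a_{i,1}+a_{i,2}-a_{i,3}-a_{i,4}$ with every $a_{i,j}\in A_{s_i}$ (some terms possibly zero). By bilinearity of $\otimes$ and because $a_{i,j}\in A_{s_i}$ means $s_i\otimes a_{i,j}\in A$, we get $s_i\otimes w=s_i\otimes a_{i,1}+s_i\otimes a_{i,2}-s_i\otimes a_{i,3}-s_i\otimes a_{i,4}\in 2A-2A$. Since $2A-2A$ is symmetric under negation, $v\otimes w=(s_1\otimes w)+(s_2\otimes w)-(s_3\otimes w)-(s_4\otimes w)$ is a sum of four elements of $2A-2A$, and its positive and negative parts each use at most $8$ members of $A$, so $v\otimes w\in 8A-8A$, as desired.

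There is no genuine obstacle here; the proof is a clean double application of Sanders' lemma. The one thing to watch is the bookkeeping of sumset orders and densities: the ``$8$'' in the conclusion is exactly $2\times 4$, the $2$ coming from the inner sumsets $2A_{s_i}-2A_{s_i}$ and the $4$ from the representation $v=s_1+s_2-s_3-s_4$ supplied by Lemma~\ref{lemmabogolyubov} in the outer step, and one should check this composition lands precisely in $8A-8A$ and not something larger. The density bounds cause no trouble, since all codimensions produced are bounded by a fixed power of $\log(1/\d)$ and intersecting the four $W^{(i)}$ only multiplies the codimension by $4$; if explicit bounds were wanted, both $V$ and each $W_v$ would have density at least $|\F|^{-4C(\log(2/\d))^4}$.
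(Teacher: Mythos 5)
Your proof is correct and is essentially the paper's argument: average to find the dense set of dense rows, apply Sanders' lemma once in the first coordinate and once per row in the third coordinate, intersect the four subspaces coming from a representation $v=s_1+s_2-s_3-s_4$, and track the sumset orders to land in $8A-8A$. The only cosmetic difference is that the paper states the dense-rows step without spelling out the Markov/averaging computation.
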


\begin{proof}
	There exist a dense subset $B\s \F^{n_1}$ and dense subsets $C_b\s \F^{n_3}$ for each $b\in B$ such that $b\otimes C_b\s A$. By Bogolyubov's lemma, $2B-2B$ contains a dense subspace $V\s \F^{n_1}$, and for every $b\in B$, $2C_b-2C_b$ contains a dense subspace $L_b \s \F^{n_3}$. For any $v\in V$, choose $b_1,b_2,b_3,b_4\in B$ with $v=b_1+b_2-b_3-b_4$ and set $W_v=\bigcap_{i\leq 4}L_{b_i}$. Note that $b_i\otimes w\in 2A-2A$ for every $i\leq 4$ and $w\in W_v$, therefore $v\otimes w\in 8A-8A$.
\end{proof}

\begin{lemma} \label{intersect2dimsystems}
	Suppose that we have dense subspaces $V,V'\s \F^{n_1}$, for each $v\in V$ a dense subspace $W_v\s \F^{n_3}$, and for each $v'\in V'$ a dense subspace $W'_{v'}\s \F^{n_3}$. Then $(\bigcup_{v\in V} v\otimes W_v)\cap (\bigcup_{v'\in V'} v'\otimes W'_{v'})=\bigcup_{v\in V\cap V'} v\otimes (W_v\cap W'_v)$. In particular, this intersection is a dense subset of $\cD$.
\end{lemma}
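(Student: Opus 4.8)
The plan is to prove the set identity directly and then read off density from it. First I would establish the inclusion $\bigcup_{v\in V\cap V'} v\otimes(W_v\cap W'_v)\subseteq (\bigcup_{v\in V} v\otimes W_v)\cap(\bigcup_{v'\in V'} v'\otimes W'_{v'})$, which is immediate: if $v\in V\cap V'$ and $w\in W_v\cap W'_v$, then $v\otimes w$ lies in $v\otimes W_v$ (so in the first union) and in $v\otimes W'_v$ (so in the second union, taking $v'=v$).

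For the reverse inclusion, take $p\in(\bigcup_{v\in V} v\otimes W_v)\cap(\bigcup_{v'\in V'} v'\otimes W'_{v'})$. If $p=0$ there is nothing to check (since each $W_v$, being a nonzero subspace — densityforces this — contains $0$, and $V\cap V'$ is nonempty); so assume $p\neq 0$. Then $p=v\otimes w$ for some $v\in V$, $w\in W_v$ with $v\neq 0$, $w\neq 0$, and also $p=v'\otimes w'$ for some $v'\in V'$, $w'\in W'_{v'}$ with $v'\neq 0$, $w'\neq 0$. A nonzero pure tensor $v\otimes w$ in $\F^{n_1}\otimes\F^{n_3}$ determines the line $\F v$ and the line $\F w$ uniquely, so $v' = \lambda v$ and $w' = \lambda^{-1} w$ for some $\lambda\in\F^\times$. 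Since $V'$ is a subspace containing $v'=\lambda v$, it contains $v$; hence $v\in V\cap V'$. Likewise $w\in W_v$ and, since $W'_{v'}=W'_{\lambda v}$ is a subspace containing $w'=\lambda^{-1}w$, we get $w\in W'_{\lambda v}$. The only subtlety is that I need $W'_{\lambda v} = W'_{v}$; but the hypothesis assigns a subspace $W'_{v'}$ to each $v'\in V'$, and there is no a priori reason the assignment is constant on lines. I would handle this by noting that it suffices to use the value of $W'$ at the single point $v'=\lambda v$: redefine, if necessary, $\widetilde W'_{v} := W'_{\lambda v}$ for the relevant $\lambda$ — but more cleanly, simply observe that $p = v\otimes w = (\lambda v)\otimes(\lambda^{-1}w)$ shows $p\in (\lambda v)\otimes W'_{\lambda v}$ directly, so $w \in \lambda\cdot W'_{\lambda v}$, and then replace $w$ by the representative $\lambda^{-1}w' \in W'_{\lambda v}$ of the same line; rescaling $v\mapsto \lambda v$ and $w\mapsto \lambda^{-1}w$ we may assume outright that $v=v'$ and $w=w'$, so $w\in W_v\cap W'_v$ with $v\in V\cap V'$, giving $p\in v\otimes(W_v\cap W'_v)$.

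Having the identity, density follows from Lemma~\ref{intersect2dimsystems}'s hypotheses: $V\cap V'$ is the intersection of two dense subspaces of $\F^{n_1}$, hence a dense subspace (its codimension is at most the sum of the two codimensions); and for each $v\in V\cap V'$, $W_v\cap W'_v$ is an intersection of two dense subspaces of $\F^{n_3}$, hence dense. Therefore $\bigcup_{v\in V\cap V'} v\otimes(W_v\cap W'_v)$ has size at least (density of $V\cap V'$) $\times$ (min density of $W_v\cap W'_v$) $\times |\cD|$, a fixed positive fraction of $|\cD|$. The main obstacle — and it is a minor one — is the bookkeeping around the $\F^\times$-scaling ambiguity of pure tensors and the fact that $W'$ is indexed by vectors rather than by lines; once one fixes conventions so that rescaling the first factor is absorbed, the argument is the elementary observation that a nonzero rank-one tensor has a well-defined pair of lines.
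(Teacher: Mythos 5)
Your $\supseteq$ inclusion and the concluding density count are fine, but the $\subseteq$ direction has a genuine gap, and it sits exactly at the point you flagged. After writing $p=v\otimes w=(\lambda v)\otimes(\lambda^{-1}w)$ with $w\in W_v$ and $\lambda^{-1}w\in W'_{\lambda v}$, your rescaling ``$v\mapsto\lambda v$, $w\mapsto\lambda^{-1}w$'' only relocates the mismatch: after rescaling you know $\lambda^{-1}w\in W'_{\lambda v}$, but you would now need $\lambda^{-1}w\in W_{\lambda v}$, which is not given (you only know $w\in W_v$). No fix is possible under your reading of the two unions as honest subsets of $\F^{n_1}\otimes\F^{n_3}$: the stated identity is then simply false once $|\F|\geq 3$. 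Indeed, take $V=V'=\F^{n_1}$, fix nonzero $v\in\F^{n_1}$, $w\in\F^{n_3}$ and some $\lambda\in\F\setminus\{0,1\}$, and choose hyperplanes (hence dense subspaces) of $\F^{n_3}$ so that $w\in W_v$ but $w\notin W_{\mu v}$ for all $\mu\neq 1$, and $w\in W'_{\lambda v}$ but $w\notin W'_{\mu v}$ for all $\mu\neq\lambda$ (all other $W_u,W'_u$ can be the whole space). Then $p=v\otimes w=(\lambda v)\otimes(\lambda^{-1}w)$ lies in both unions, yet for every $\mu\neq 0$ we have $w\notin W_{\mu v}\cap W'_{\mu v}$, so $p$ lies in no $\mu v\otimes(W_{\mu v}\cap W'_{\mu v})$ and hence not in the right-hand side.

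The reason the paper can dismiss the identity as trivial is that it does not read these objects as sets of tensors: throughout the paper $\cD$, like $\cB$, is a \emph{multiset} of pure tensors, i.e.\ it is indexed by the tuples, and $\bigcup_{v\in V}v\otimes W_v$ is likewise the family indexed by pairs $(v,w)$ with $v\in V$, $w\in W_v$. Under that convention an element common to both families is precisely a pair with $v\in V\cap V'$ and $w\in W_v\cap W'_v$, so the identity holds by definition and only the density count remains, which you carried out correctly (and as in the paper). Note also that for the way the lemma is actually used (Lemma \ref{sumdense}, and its general-$d$ analogue Lemma \ref{intersect}) only the inclusion $\supseteq$ together with the density of the right-hand side is needed, and that part of your argument is sound; but as a proof of the equality you stated, the argument does not close, and under your interpretation it cannot.
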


\begin{proof}
	The identity is trivial. Since the subspaces $V\cap V'$ and $W_v\cap W'_v$ are dense, the second assertion follows.
\end{proof}

\subsubsection{How can this be extended to $d>3$?} \label{subsubhowextend}

Now we briefly sketch what the main difficulties are in the $d>3$ case and how we can address them. The underlying strategy is similar: we take an ordering $\prec$ of the set of non-empty subsets $I\s \lbrack d-1\rbrack$, and for each such $I$ we choose $Q_I$ such that any array
\begin{equation}
r\in W_{\lbrack d\rbrack}+ \sum_{J\prec I} (W_J\otimes \F^{J^c}+\F^{J}\otimes W_{J^c})+\sum_{J\succeq I} \F^{J}\otimes H_{J^c}(r) \label{eqndlarge}
\end{equation}
with $r.q=0$ for almost all $q\in Q_I$ has $$r\in W_{\lbrack d \rbrack}+\sum_{J\preceq I} (U_J\otimes \F^{J^c}+\F^{J}\otimes U_{J^c})+\sum_{J\succ I} \F^{J}\otimes K_{J^c}(r)$$ where $U_J,U_{J^c},K_{J^c}(r)$ can have dimension slightly larger than those of $W_{J},W_{J^c}$ and $H_{J^c}$, but they are still low dimensional. In the $d=3$ case, we have made use of a decomposition $r=r_2+r_3+r_4$ where $r_4\in \F^{I}\otimes H_{I^c}(r)$, $r_2u$ has small partition rank and $r_3u$ is in a small subspace independent of $r$ for every $u\in \F^{I}$. In general, such a decomposition need not exist. For example, when $d=4$ and $I=\{1,2\}$, then an array in $W_{\{1\}}\otimes \F^{\{2,3,4\}}$ (or in $\F^{n_1}\otimes H_{\{2,3,4\}}(r)$ if we were to take $\{1,2\}\prec \{1\}$), when multiplied by some pure tensor $u\in \F^{\{1,2\}}$, yields a tensor which need not have small partition rank and need not lie a small space independent of $r$. However, by restricting the possible choices for $u$, we can make sure that the product is always zero. So we will take a decomposition $r=r_1+r_2+r_3+r_4$ such that $r_4\in \F^{I}\otimes H_{I^c}(r)$; for every pure tensor $u\in \F^{I}$, $r_2u$ has small partition rank and $r_3u$ lies in a small space depending only on $u$; and crucially, for every $q\in Q_I$, $r_1.q=0$. To achieve this, we need to insist that $J\prec I$ whenever $J\subsetneq I$ and that $Q_I$ is orthogonal to certain subspaces. To see this, note that in the above example where $d=4$ and $I=\{1,2\}$ we need that $\{1\}\prec \{1,2\}$ and $Q_{\{1,2\}}$ is orthogonal to $W_{\{1\}}\otimes \F^{\{2,3,4\}}$. (If we had $\{1,2\}\prec \{1\}$, then in (\ref{eqndlarge}) we would have a term $\F^{n_1}\otimes H_{\{2,3,4\}}(r)$ rather than $W_{\{1\}}\otimes \F^{\{2,3,4\}}$, which we could not control.)

We also need to generalise Lemma \ref{sumdense} to the case $d>3$. Instead of using $\bigcup_{v\in V}v\otimes W_v$ as in Lemma \ref{find2dimsystem}, we need to define an object in $\cB$ such that
\begin{enumerate}
	\item an instance of the object can be found in $k\B'-k\B'$ for some small $k$ whenever $\cB'$ is dense in $\cB$ (generalising Lemma \ref{find2dimsystem})
	
	\item the intersection of few instances of this object is a dense subset of $\cB$ (generalising Lemma \ref{intersect2dimsystems})
\end{enumerate}

In the next subsection we describe this object and show that it has the required properties.

\subsection{Construction of some auxiliary sets}

%The next definition describes a type of set that will be useful for us when constructing $Q$ in Lemma \ref{mainlemma}. Its key properties are described in this subsection.

\begin{definition} \label{ksystem}
	Suppose that we have a collection of vector spaces as follows. The first one is $U\s \F^{n_1}$, of codimension at most $l$. Then, for every $u_1\in U$, there is some $U_{u_1}\s \F^{n_2}$. In general, for every $2\leq k\leq d$ and every $u_1\in U,u_2\in U_{u_1},\dots,u_{k-1}\in U_{u_1,\dots,u_{k-2}}$, there is a subspace $U_{u_1,\dots,u_{k-1}}\s \F^{n_k}$. Assume, in addition, that the codimension of $U_{u_1,\dots,u_{k-1}}$ in $\F^{n_k}$ is at most $l$ for every $u_1\in U,\dots,u_{k-1}\in U_{u_1,\dots,u_{k-2}}$. Then the multiset $Q=\{u_1\otimes \dots \otimes u_d: u_1\in U,\dots,u_d\in U_{u_1,\dots,u_{d-1}}\}$ is called an $l$-system.
	
	%Suppose we have subspaces
	%\begin{itemize}
	%	\item $U\subset \F^{n_1}$
	%	\item for any $2\leq k\leq d$, $U_{u_1,u_2,\dots,u_{k-1}}\subset\F^{n_k}$ for each $u_1\in U,u_2\in U_{u_1},\dots,u_{k-1}\in U_{u_1,\dots,u_{k-2}}$
	%\end{itemize} such that all these subspaces have codimension at most $l$. Let $Q$ be the multiset consisting of all $u_1\otimes\dots\otimes u_d$ with $u_1\in U$, $u_2\in U_{u_1}$,\dots,$u_d\in U_{u_1,\dots,u_{d-1}}$. Then $Q$ is called an $l$-system.
\end{definition}

The next lemma is the generalisation of Lemma \ref{intersect2dimsystems} from the previous subsection.

\begin{lemma} \label{intersect}
	Let $Q$ be an $l$-system and let $Q'$ be an $l'$-system. Then $Q\cap Q'$ contains an $(l+l')$-system.
\end{lemma}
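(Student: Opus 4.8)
## Proof Plan for Lemma~\ref{intersect}

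The plan is to prove this by induction on $d$, peeling off one coordinate at a time and intersecting the corresponding subspaces, using the fact that the intersection of two subspaces of codimension $l$ and $l'$ has codimension at most $l+l'$. First I would set up notation: write $Q$ as the $l$-system built from $U\subset \F^{n_1}$ (codimension $\leq l$) together with subspaces $U_{u_1,\dots,u_{k-1}}\subset \F^{n_k}$, and write $Q'$ as the $l'$-system built from $U'\subset\F^{n_1}$ and subspaces $U'_{u_1,\dots,u_{k-1}}$. I then build a candidate $(l+l')$-system $Q''$ as follows: let $W=U\cap U'\subset \F^{n_1}$, which has codimension at most $l+l'$; for each $u_1\in W$ we have $u_1\in U$ and $u_1\in U'$, so both $U_{u_1}\subset\F^{n_2}$ and $U'_{u_1}\subset\F^{n_2}$ are defined, and I set $W_{u_1}=U_{u_1}\cap U'_{u_1}$, again of codimension at most $l+l'$ in $\F^{n_2}$. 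Continuing, for $2\leq k\leq d$ and $u_1\in W,u_2\in W_{u_1},\dots,u_{k-1}\in W_{u_1,\dots,u_{k-2}}$, each $u_j$ lies in the corresponding space for both the $Q$-tower and the $Q'$-tower (this is the content of the induction on $k$, or just a direct check), so $U_{u_1,\dots,u_{k-1}}$ and $U'_{u_1,\dots,u_{k-1}}$ are both defined, and I set $W_{u_1,\dots,u_{k-1}}=U_{u_1,\dots,u_{k-1}}\cap U'_{u_1,\dots,u_{k-1}}$, of codimension at most $l+l'$. This collection of subspaces defines an $(l+l')$-system $Q''=\{u_1\otimes\dots\otimes u_d:u_1\in W,\dots,u_d\in W_{u_1,\dots,u_{d-1}}\}$.

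It then remains to check $Q''\subset Q\cap Q'$. If $u_1\otimes\dots\otimes u_d\in Q''$, then $u_1\in W\subset U$, and for each $k$ we have $u_k\in W_{u_1,\dots,u_{k-1}}\subset U_{u_1,\dots,u_{k-1}}$; since $W_{u_1,\dots,u_{j-1}}\subset U_{u_1,\dots,u_{j-1}}$ for all $j<k$, the subspace $U_{u_1,\dots,u_{k-1}}$ is indeed one of the subspaces appearing in the definition of the $l$-system $Q$ along the chain $u_1,\dots,u_{k-1}$, so $u_1\otimes\dots\otimes u_d\in Q$. The symmetric argument gives $u_1\otimes\dots\otimes u_d\in Q'$. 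One should also note the multiset convention: elements of $\cB$ (hence of $l$-systems) carry multiplicities, so strictly one should observe that the multiplicity of a pure tensor in $Q''$ is at most its multiplicity in each of $Q$ and $Q'$, which is clear from the construction since each $W$-space is contained in the corresponding $U$-space. Alternatively one can simply treat $l$-systems as sets here, as the statement of the lemma only concerns containment.

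The only genuine subtlety — and the step I expect to require the most care — is the bookkeeping that along any admissible chain $u_1\in W, u_2\in W_{u_1},\dots$, the relevant subspaces $U_{u_1,\dots,u_{k-1}}$ and $U'_{u_1,\dots,u_{k-1}}$ are simultaneously defined, so that their intersection makes sense; this is where one invokes, inductively in $k$, that $W_{u_1,\dots,u_{j-1}}\subset U_{u_1,\dots,u_{j-1}}\cap U'_{u_1,\dots,u_{j-1}}$ at every earlier stage $j$. Everything else is the elementary linear-algebra fact $\codim(A\cap B)\leq \codim A+\codim B$ applied $d$ times. There is no analytic input and no appeal to Bogolyubov-type results; the lemma is purely structural, the combinatorial analogue of the trivial identity in Lemma~\ref{intersect2dimsystems}.
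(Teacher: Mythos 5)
Your construction is exactly the paper's: intersect the two towers of subspaces level by level, noting that codimensions add and that well-definedness follows inductively because each intersected space sits inside both original spaces. Correct, and essentially identical to the published proof (your extra remarks on multiset multiplicities and the containment check are just the "well-defined" step made explicit).
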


\begin{proof}
	Let $Q$ have spaces as in Definition \ref{ksystem} and let $Q'$ have spaces $U'_{u'_1,\dots,u'_{k-1}}$.
	We define an $(l+l')$-system $P$ contained in $Q\cap Q'$ as follows. Let $V=U\cap U'$. Suppose we have defined $V_{v_1,\dots,v_{j-1}}$ for all $j\leq k$. Let $v_1\in V,v_2\in V_{v_1},\dots,v_{k-1}\in V_{v_1,\dots,v_{k-2}}$.  We let $V_{v_1\dots,v_{k-1}}=U_{v_1\dots,v_{k-1}}\cap U'_{v_1\dots,v_{k-1}}$. This is well-defined and has codimension at most $l+l'$ in $\F^{n_k}$. Let $P$ be the $(l+l')$-system with spaces $V_{v_1,\dots,v_{k-1}}$.
\end{proof}

The next lemma is the generalisation of Lemma \ref{find2dimsystem} from the previous subsection.

\begin{lemma} \label{findsystem}
	Let $\cB'\subset \cB$ be a multiset such that $|\cB'|\geq \delta |\cB|$. Then there exists an $f_1$-system whose elements are chosen from $f_2\cB'-f_2\cB'$ with $f_1=C\cdot4^{d}(\log(2^d/\d))^4$ and $f_2=4^d$.
\end{lemma}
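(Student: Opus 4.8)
The plan is to build the $f_1$-system level by level, peeling off one coordinate direction at a time, mirroring the $d=3$ argument in Lemma \ref{find2dimsystem} but iterated $d$ times. First I would record the basic density-unfolding fact: since $\cB' \subset \cB$ has density at least $\d$, and $\cB$ is parametrized by $(u_1,\dots,u_d) \in \F^{n_1}\times\dots\times\F^{n_d}$, an averaging argument gives a subset $S^{(1)}\s \F^{n_1}$ of density at least $\d/2$ such that for each $u_1\in S^{(1)}$ the slice $\{(u_2,\dots,u_d): u_1\otimes\dots\otimes u_d \in \cB'\}$ has density at least $\d/2$ inside $\F^{n_2}\times\dots\times\F^{n_d}$. (I will track densities explicitly; the worst slice has density $\ge \d/(2) \ge 2^{-d}\delta$ after all $d$ peelings, which is why the $\log(2^d/\d)$ appears.) Iterating this unfolding down all $d$ coordinates produces a dense $S^{(1)}$, and for every prefix $u_1\in S^{(1)},\dots,u_{k-1}\in S^{(k-1)}_{u_1,\dots,u_{k-2}}$ a dense slice-set $S^{(k)}_{u_1,\dots,u_{k-1}} \s \F^{n_k}$, such that any choice $u_k\in S^{(k)}_{u_1,\dots,u_{k-1}}$ along the way yields $u_1\otimes\dots\otimes u_d\in\cB'$.

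Next I would apply Bogolyubov's lemma (Lemma \ref{lemmabogolyubov}) at each node to convert the dense \emph{sets} into dense \emph{subspaces}. Concretely: since $|S^{(1)}|\ge 2^{-1}\d|\F^{n_1}|$, the set $2S^{(1)}-2S^{(1)}$ contains a subspace $U\s\F^{n_1}$ of codimension at most $C(\log(2/\d))^4$; similarly for every node the density-$\ge 2^{-d}\d$ set $S^{(k)}_{u_1,\dots,u_{k-1}}$ gives a subspace inside $2S^{(k)}_{u_1,\dots,u_{k-1}}-2S^{(k)}_{u_1,\dots,u_{k-1}}$ of codimension at most $C(\log(2^d/\d))^4$. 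The point, exactly as in the proof of Lemma \ref{find2dimsystem}, is that for $u_1\in U$ I may write $u_1 = b_1+b_2-b_3-b_4$ with $b_i\in S^{(1)}$, and then intersect the four subspaces coming from the four nodes $b_1,b_2,b_3,b_4$ to define $U_{u_1}$; this costs a factor of $4$ in the codimension bound and a factor of $4$ in the number of $\cB'$-elements used. Carrying this through all $d$ levels, each $u_k\otimes(\text{rest})$ that I commit to is expressible using at most $4$ elements of the previous level's structure, so a full pure tensor $u_1\otimes\dots\otimes u_d$ in the resulting system lies in $4^d\cB'-4^d\cB'$, giving $f_2=4^d$; and the codimension of every subspace in the system is at most $\max\bigl(C(\log(2/\d))^4, \,4\cdot C(\log(2^d/\d))^4,\,\dots\bigr)\le C\cdot 4^d(\log(2^d/\d))^4 = f_1$.

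The subtlety to handle carefully — and the main thing to get right — is the bookkeeping of which $\cB'$-elements get reused when one descends the tree and takes the fourfold intersections at each level, so that the total does not blow up beyond $4^d$. The clean way is an induction on $d$: assume the statement for $d-1$ applied to each dense slice $S^{(1)}_{\text{node}}\times\text{(tail)}$ viewed as a subset of $\F^{n_2}\otimes\dots\otimes\F^{n_d}$, obtain for each node $b\in S^{(1)}$ an $f_1(d-1)$-system in $f_2(d-1)\cdot(\text{that slice's }\cB')-f_2(d-1)\cdot(\cdots)$, then use Bogolyubov on $S^{(1)}$ to get $U\s 2S^{(1)}-2S^{(1)}$, write each $u_1\in U$ as $b_1+b_2-b_3-b_4$, apply Lemma \ref{intersect} to intersect the four $f_1(d-1)$-systems of $b_1,\dots,b_4$ into a single $4f_1(d-1)$-system, and tensor with $u_1$; the resulting $\max(\codim U, 4f_1(d-1))$-system has elements in $4f_2(d-1)\cB'-4f_2(d-1)\cB'$. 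One checks $4f_2(d-1)=4\cdot 4^{d-1}=4^d=f_2(d)$ and $\max\bigl(C(\log(2/\d))^4,\,4C\cdot4^{d-1}(\log(2^{d-1}/\d))^4\bigr)\le C\cdot 4^d(\log(2^d/\d))^4 = f_1(d)$, completing the induction; the base case $d=1$ is immediate (take $U\s 2\cB'-2\cB'$ a subspace of codimension $\le C(\log(1/\d))^4$ via Bogolyubov, which is a $1$-system, or just $\d$-appropriately the bound $f_1(1)=4C(\log(2/\d))^4$).
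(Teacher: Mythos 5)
Your proposal is correct and follows essentially the same route as the paper's proof: induction on $d$, an averaging step giving a dense set of first coordinates whose slices are dense, the induction hypothesis applied to each slice, Bogolyubov (Lemma \ref{lemmabogolyubov}) on the dense set of first coordinates, writing each $u_1=b_1+b_2-b_3-b_4$ and intersecting the four level-$(d-1)$ systems via Lemma \ref{intersect} before tensoring with $u_1$. The only bookkeeping nuance is that the induction hypothesis is applied to slices of density $\delta/2$, so it yields codimension $C\cdot 4^{d-1}(\log(2^{d}/\delta))^4$ rather than $C\cdot 4^{d-1}(\log(2^{d-1}/\delta))^4$; after the factor $4$ from the fourfold intersection this is still exactly $C\cdot 4^{d}(\log(2^{d}/\delta))^4=f_1(d)$, so your conclusion stands.
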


\begin{proof}
	The proof is by induction on $d$. The case $d=1$ is a direct consequence of Lemma \ref{lemmabogolyubov}. Suppose that the lemma has been proved for all $d'<d$ and let $\cB'\subset \cB$ be a multiset such that $|\cB'|\geq \delta |\cB|$. Let $\cD$ be the multiset $\{v_2\otimes\dots\otimes v_d: v_2\in \F^{n_2},\dots,v_d\in \F^{n_d}\}$. For each $u\in \F^{n_1}$, let $\cB_u'=\{s\in \cD: u\otimes s\in \cB'\}$ and let $T=\{u\in \F^{n_1}: |\cB_u'|\geq\frac{\delta}{2}|\cD|\}$. By averaging, we have that $|T|\geq\frac{\delta}{2}|\F^{n_1}|$. Now by the induction hypothesis, for every $t\in T$, there exists a $g_1$-system in $\F^{n_2}\otimes \dots \otimes \F^{n_d}$ (whose definition is analogous to the definition of a system in $\F^{n_1}\otimes \dots \otimes \F^{n_d}$), called $P_t$, contained in $g_2\cB'_t-g_2\cB'_t$ where $g_1=C\cdot 4^{d-1}(\log(2^{d}/\d))^4$ and $g_2=4^{d-1}$. By Lemma \ref{lemmabogolyubov}, $2T-2T$ contains a subspace $U\s \F^{n_1}$ of codimension at most $C(\log(2/\d))^4$. For each $u\in U$, write $u=t_1+t_2-t_3-t_4$ arbitrarily with $t_i\in T$, and let $Q_u=P_{t_1}\cap P_{t_2}\cap P_{t_3}\cap P_{t_4}$, which is a $g_3$-system with $g_3=4g_1=C\cdot 4^d(\log (2^d/\d))^4$, by Lemma \ref{intersect}. Thus, $Q=\bigcup_{u\in U} (u\otimes Q_u)$ is indeed an $f_1$-system. Moreover, for any $u\in U,s\in Q_u$, we have $u\otimes s=t_1\otimes s+t_2\otimes s-t_3\otimes s-t_4\otimes s$ for some $t_i\in T$ and $s\in \bigcap_{i\leq 4} P_{t_i}$. Then $t_i\otimes s\in g_2\cB'-g_2\cB'$, therefore $u\otimes s\in 4g_2\cB'-4g_2\cB'$, so the elements of $Q$ are indeed chosen from $f_2\cB'-f_2\cB'$.
\end{proof}

The next lemma describes a property of systems which was not needed for us in the $d=3$ case, but is crucial in the general case. It is required for finding a suitable decomposition $r=r_1+r_2+r_3+r_4$ described at the end of the previous subsection. Indeed, we need a set $Q_I$ which is orthogonal to certain spaces of the form $W_J\otimes \F^{J^c}$ (ie. is contained in $W_J^{\perp}\otimes \F^{J^c}$) to make sure that $r_1.q=0$ for every $q\in Q_I$. We will use the following lemma to guarantee the existence of such a set $Q_I$.

\begin{lemma} \label{systemspace}
	Let $Q$ be a $k$-system and for every non-empty $I\s \lbrack d\rbrack$, let $L_I\s \F^{I}$ be a subspace of codimension at most $l$. Let $T=\bigcap_I(L_I\otimes \F^{I^c})$. Then $Q\cap T$ contains an $f$-system for $f=k+2^dl$.
\end{lemma}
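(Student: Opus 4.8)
The plan is to build the desired $f$-system inside $Q\cap T$ directly, choosing the pure tensor factors one coordinate at a time. The point is that for a pure tensor $v_1\otimes\dots\otimes v_d$ one has $v_1\otimes\dots\otimes v_d\in L_I\otimes\F^{I^c}$ as soon as $\bigotimes_{i\in I}v_i\in L_I$: under the identification $\F^{[d]}=\F^I\otimes\F^{I^c}$ the tensor $v_1\otimes\dots\otimes v_d$ corresponds to $(\bigotimes_{i\in I}v_i)\otimes(\bigotimes_{i\in I^c}v_i)$, and if the $\F^I$-factor lies in $L_I$ then the whole tensor lies in $L_I\otimes\F^{I^c}$. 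So it suffices to produce a sub-system of $Q$ (in the sense of Definition \ref{ksystem}) all of whose pure tensors satisfy $\bigotimes_{i\in I}v_i\in L_I$ for every non-empty $I\subseteq[d]$.

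The key observation is a linear-algebra one: for a fixed $a\in\F^{I\setminus\{\max I\}}$, the set $\{x\in\F^{n_{\max I}}:a\otimes x\in L_I\}$ is a subspace of $\F^{n_{\max I}}$ of codimension at most $l$, since it is the kernel of the linear map $x\mapsto a\otimes x+L_I$ from $\F^{n_{\max I}}$ to $\F^I/L_I$, whose image has dimension at most $\dim(\F^I/L_I)\le l$. Crucially, the constraint ``$\bigotimes_{i\in I}v_i\in L_I$'' becomes \emph{linear} in the single variable $v_{\max I}$ once the factors $v_i$ with $i\in I$, $i<\max I$, have been fixed --- which is exactly why we should process the coordinates in increasing order.

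Now the construction proceeds by recursion on the coordinate $j$, equivalently by induction on $j$ showing that the partial system has the required properties. Having fixed $v_1\in V,\dots,v_{j-1}\in V_{v_1,\dots,v_{j-2}}$ so that $\bigotimes_{i\in I}v_i\in L_I$ for all non-empty $I\subseteq[j-1]$, we let $V_{v_1,\dots,v_{j-1}}\subseteq\F^{n_j}$ (and $V\subseteq\F^{n_1}$ when $j=1$) be the intersection of $U_{v_1,\dots,v_{j-1}}$ with the subspaces $\{x\in\F^{n_j}:(\bigotimes_{i\in I\setminus\{j\}}v_i)\otimes x\in L_I\}$ over all non-empty $I\subseteq[j]$ with $\max I=j$. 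There are exactly $2^{j-1}$ such sets $I$ (they correspond to the arbitrary subsets $I\setminus\{j\}\subseteq[j-1]$), each contributing codimension at most $l$ by the observation above, while $U_{v_1,\dots,v_{j-1}}$ has codimension at most $k$; hence $V_{v_1,\dots,v_{j-1}}$ has codimension at most $k+2^{j-1}l\le k+2^dl=f$ in $\F^{n_j}$. For any $v_j$ in this space we have $v_j\in U_{v_1,\dots,v_{j-1}}$ and $\bigotimes_{i\in I}v_i\in L_I$ for every non-empty $I\subseteq[j]$ (by the inductive hypothesis when $\max I<j$, and by construction when $\max I=j$), so the recursion continues. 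After $d$ steps the resulting system $P=\{v_1\otimes\dots\otimes v_d:v_1\in V,\dots,v_d\in V_{v_1,\dots,v_{d-1}}\}$ is an $f$-system, each of its elements lies in $Q$ because $v_j\in U_{v_1,\dots,v_{j-1}}$ for all $j$, and lies in $T$ because $\bigotimes_{i\in I}v_i\in L_I$ for all non-empty $I\subseteq[d]$; thus $P\subseteq Q\cap T$.

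I do not expect a serious obstacle here: the argument is essentially bookkeeping. The only thing to get right is that each defining constraint of $T$, indexed by a set $I$, should be ``spent'' at coordinate $\max I$ (the unique coordinate in which it is linear once the others are fixed), and that at a given coordinate $j$ only the $2^{j-1}\le 2^{d-1}$ sets with maximum $j$ need to be handled --- so the codimension of the system never grows by more than $2^{d-1}l\le 2^dl$ beyond that of the original $k$-system.
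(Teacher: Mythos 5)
Your proof is correct and follows essentially the same route as the paper: you build the sub-system coordinate by coordinate, spending each constraint $I$ at the coordinate $\max I$, where it becomes a linear condition of codimension at most $l$ once the earlier factors are fixed, giving total codimension at most $k+2^{j-1}l\le k+2^d l$ at step $j$. The paper phrases this as a codimension bound for $(u_1\otimes\dots\otimes u_{j-1}\otimes U_{u_1,\dots,u_{j-1}})\cap(L_I\otimes \F^{\lbrack j\rbrack\setminus I})$, but the underlying argument is identical to your kernel-of-a-linear-map observation.
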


\begin{proof}
	Let the spaces of $Q$ be $U_{u_1,\dots,u_{j-1}}$. It suffices to prove that for every $1\leq j\leq d$, and every $u_1\in U,\dots,u_{j-1}\in U_{u_1,\dots,u_{j-2}}$, the codimension of $(u_1\otimes \dots\otimes u_{j-1}\otimes U_{u_1,\dots,u_{j-1}})\cap \bigcap_{I\subset \lbrack j\rbrack, j\in I}(L_I\otimes \F^{\lbrack j\rbrack \setminus I})$ in $u_1\otimes \dots\otimes u_{j-1}\otimes U_{u_1,\dots,u_{j-1}}$ is at most $2^dl$. Thus, it suffices to prove that for every $I\s \lbrack j\rbrack$ with $j\in I$, the codimension of $(u_1\otimes \dots\otimes u_{j-1}\otimes U_{u_1,\dots,u_{j-1}})\cap(L_I\otimes \F^{\lbrack j\rbrack \setminus I})$ in $u_1\otimes \dots\otimes u_{j-1}\otimes U_{u_1,\dots,u_{j-1}}$ is at most $l$. But this is equivalent to the statement that $\big((\bigotimes_{i\in I\setminus \{j\}}u_i)\otimes U_{u_1,\dots,u_{j-1}}\big)\cap L_I$ has codimension at most $l$ in $(\bigotimes_{i\in I\setminus \{j\}}u_i)\otimes U_{u_1,\dots,u_{j-1}}$, which clearly holds.
\end{proof}

\subsection{The proof of Lemma \ref{mainlemma}}

We now turn to the proof of Lemma \ref{mainlemma}. As described in the outline, the first step is to find a $Q_{\lbrack d\rbrack}$ such that if $r.q=0$ for almost all $q\in Q_{\lbrack d\rbrack}$, then $r=x+y$ where $x\in V_{\lbrack d\rbrack}$ for a small space $V_{\lbrack d\rbrack}$ independent of $r$, and $y$ has low partition rank.

\begin{lemma} \label{focusondeg}
	Let $d\geq 2$ and suppose that Lemma \ref{mainlemma} has been proved for $d'=d-1$. Let $\cB'\s \cB$ be such that $|\cB'|\geq \d |\cB|$ for some $\d>0$. Then there exist some $Q\subset 2\cB'-2\cB'$ consisting of pure tensors and a subspace $V_{\lbrack d\rbrack}\s \F^{\lbrack d\rbrack}$ of dimension at most $4C(\log (2/\d))^4$ with the following property. Any array $r$ with $r.q=0$ for at least $\frac{7}{8}|Q|$ choices $q\in Q$ can be written as $r=x+y$ where $x\in V_{\lbrack d \rbrack}$ and $y$ is $f$-degenerate for $f=G(d-1,\frac{\d}{4|\F|^{4C(\log 2/\d)^4}})$.
\end{lemma}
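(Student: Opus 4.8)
The plan is to split off the last coordinate of the tensor, build $Q$ out of fibrewise Bogolyubov slices, and then reduce the desired decomposition to Lemma~\ref{biastorank}. Write $\cG=\F^{[d-1]}\otimes\F^{n_d}$ and let $\cD\s\F^{[d-1]}$ be the multiset of pure tensors $u_1\otimes\dots\otimes u_{d-1}$. For $w\in\cD$ put $C_w=\{u\in\F^{n_d}:w\otimes u\in\cB'\}$; since $\sum_{w\in\cD}|C_w|=|\cB'|\ge\d|\cD||\F^{n_d}|$, the set $S=\{w\in\cD:|C_w|\ge\tfrac\d2|\F^{n_d}|\}$ has $|S|\ge\tfrac\d2|\cD|$. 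Applying Lemma~\ref{lemmabogolyubov} to each $C_w$ ($w\in S$) gives a subspace $L_w\s 2C_w-2C_w\s\F^{n_d}$ of codimension at most $k:=C(\log(2/\d))^4$. I may assume $n_d\ge k$: otherwise $r$ is already $n_d$-degenerate, hence $f$-degenerate since $n_d<k\le f=G(d-1,\cdot)$, and I can take $Q=\emptyset$, $V_{[d]}=\{0\}$. So I take $\codim L_w=k$ exactly, and all the slices $w\otimes L_w$ have equal size $|\F|^{n_d-k}$.

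Since $w\otimes L_w\s 2(w\otimes C_w)-2(w\otimes C_w)\s 2\cB'-2\cB'$, I set $Q:=\bigcup_{w\in S}(w\otimes L_w)$, a multiset with $|Q|=|S|\,|\F|^{n_d-k}$. Now suppose $r.q=0$ for at least $\tfrac78|Q|$ of $q\in Q$. For each $w\in S$ the set $\{u\in L_w:(rw).u=0\}$ is all of $L_w$ when $rw\in L_w^\perp$, and otherwise is a proper subspace of $L_w$, so of size at most $|\F|^{-1}|L_w|$. Summing over the (equal-size) blocks and using $|\F|\ge 2$ forces $S_r:=\{w\in S:rw\in L_w^\perp\}$ to satisfy $|S_r|\ge\tfrac34|S|\ge\tfrac{3\d}8|\cD|$.

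Next I need a subspace $V_{[d]}\s\F^{[d]}$, depending only on $\cB'$, with $\dim V_{[d]}\le 4k$ and with the property that $rw\in V_{[d]}w:=\{zw:z\in V_{[d]}\}$ for every $w\in S_r$. Granting this, for each $w\in S_r$ pick $x_w\in V_{[d]}$ with $x_ww=rw$; as $|V_{[d]}|\le|\F|^{4k}$, some single $x\in V_{[d]}$ has $x_w=x$ for at least $|S_r|\,|\F|^{-4k}\ge\tfrac{3\d}{8}|\F|^{-4k}|\cD|\ge\d'|\cD|$ of these $w$, where $\d':=\d/(4|\F|^{4k})$. For those $w$ one has $(r-x)w=0$, and since $\d'\le\tfrac12$ and Lemma~\ref{mainlemma} is available for $d-1$, Lemma~\ref{biastorank} shows $r-x$ is $G(d-1,\d')$-degenerate. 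Setting $y:=r-x$, and noting $\d'=\d/(4|\F|^{4C(\log 2/\d)^4})$ and $\dim V_{[d]}\le 4C(\log 2/\d)^4$, this is precisely the claimed decomposition.

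Everything above is routine; the main obstacle is the construction of $V_{[d]}$, i.e.\ showing that the constraint ``$rw\in L_w^\perp$ on a dense set of pure tensors $w$'' already confines $rw$ to $V_{[d]}w$ for one bounded-dimensional $V_{[d]}$. One cannot take $V_{[d]}=\{0\}$, since the annihilators $L_w^\perp$ genuinely depend on $w$, so $r$ need not be close to a tensor of small last-coordinate rank. I would build $V_{[d]}$ by a second, separate appeal to Bogolyubov's lemma, this time to the dense set $S$ (for $d=2$ this is literally a dense subset of $\F^{n_1}$; for $d\ge 3$ one instead feeds the inductive hypothesis, Lemma~\ref{mainlemma} for $d-1$, or works through the parameter set): a typical $w\in S_r$ then acquires an expression $w=w_1+w_2-w_3-w_4$ with the $w_i$ drawn from a bounded ``skeleton'' inside $S$, so that $rw$ equals a fixed combination of the $rw_i$ plus an error lying in one of the annihilators $L_{w_i}^\perp$; taking $V_{[d]}$ to be the span of the at most four skeleton-pieces, each of dimension $\le k$, yields the dimension bound $4k=4C(\log 2/\d)^4$ — which is exactly where the factor $4$ comes from. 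Making this precise, in particular transferring density from $S$ to $S_r$ compatibly with the four-term representation, is where the real work lies.
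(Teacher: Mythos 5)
The first half of your argument (building $Q$ from fibrewise Bogolyubov slices $w\otimes L_w$ over the dense set $S\s\cD$, the counting that gives $rw\in L_w^{\perp}$ for most $w$, and the final pigeonhole over $V_{\lbrack d\rbrack}$ followed by Lemma~\ref{biastorank}) matches the paper's proof. But the step you yourself flag as ``where the real work lies'' --- producing one subspace $V_{\lbrack d\rbrack}\s\F^{\lbrack d\rbrack}$, independent of $r$, of dimension at most $4C(\log(2/\d))^4$, such that every $r$ in the bad set has $rw\in V_{\lbrack d\rbrack}w$ for many $w$ --- is a genuine gap, and the route you sketch for it does not work. Bogolyubov applied to $S$ does not give a ``bounded skeleton'': it gives a subspace inside $2S-2S$, but the four-term representatives $w=w_1+w_2-w_3-w_4$ vary with $w$ and range over all of $S$, so ``the span of the at most four skeleton-pieces'' is not a fixed object. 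Moreover, for $d\ge 3$ the set $S$ lives inside the multiset of pure tensors, which is not a subspace, so Bogolyubov is not even applicable there; and even granting a representation $w=\sum\pm w_i$ with $rw_i\in L_{w_i}^{\perp}$, all this tells you is that $rw$ lies in a sum of four annihilators \emph{depending on $w$} --- subspaces of $\F^{n_d}$, not of $\F^{\lbrack d\rbrack}$ --- which is just a reformulation of the hypothesis and does not confine $rw$ to $V_{\lbrack d\rbrack}w$ for any fixed small $V_{\lbrack d\rbrack}$. You have also misdiagnosed the source of the factor $4$: in the correct bound it is not ``four pieces of dimension $k$''.

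What the paper does instead, and what your proof is missing, is a dimension-increment construction: one builds $0=V(0)\s V(1)\s\dots\s V(m)\s\F^{\lbrack d\rbrack}$ by, at each step, picking a bad array $r$ for which $rt\in V(j)t$ fails for more than half of $t\in\cD'$ and setting $V(j+1)=V(j)+\mathrm{span}(r)$. The counting you already did ($rt\in U_t^{\perp}$ for at least $\tfrac34$ of $t$) combined with the failure on half of $t$ shows that $\dim\bigl(U_t^{\perp}\cap V(j+1)t\bigr)>\dim\bigl(U_t^{\perp}\cap V(j)t\bigr)$ for at least a quarter of the $t\in\cD'$; since $\sum_{t\in\cD'}\dim\bigl(U_t^{\perp}\cap V(j)t\bigr)\le C|\cD'|(\log(2/\d))^4$ always, the process stops after $m\le 4C(\log(2/\d))^4$ steps --- this potential function is where the factor $4$ comes from, and $V_{\lbrack d\rbrack}=V(m)$ is spanned by at most $4C(\log(2/\d))^4$ arrays from the bad set, not by Bogolyubov pieces. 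Note also that one only gets (and only needs) $rt\in V_{\lbrack d\rbrack}t$ for at least half of $t\in\cD'$, not for all of $S_r$ as you demanded; your pigeonhole step then goes through verbatim. Without this increment argument (or some substitute), your proof does not establish the lemma.
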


\begin{proof}	
	Let $\cD$ be the multiset $\{u_1\otimes \dots \otimes u_{d-1}:u_1\in \F^{n_1},\dots,u_{d-1}\in\F^{n_{d-1}}\}$ and let $\cD'=\{t\in \cD: t\otimes u\in \cB' \text{ for at least } \frac{\d}{2}|\F|^{n_d} \text{ choices } u\in \F^{n_d}\}$. Clearly, we have $|\cD'|\geq \frac{\d}{2}|\cD|$. Moreover, by Lemma \ref{lemmabogolyubov}, for every $t\in \cD'$, there exists a subspace $U_t\s \F^{n_d}$ of codimension at most $C(\log (2/\d))^4$ such that $t\otimes U_t\s 2\cB'-2\cB'$. After passing to suitable subspaces, we may assume that all $U_t$ have the same codimension $k\leq C(\log (2/\d))^4$. Now let $Q=\cup_{t\in \cD'}(t\otimes U_t)$.
	
	Write $R$ for the set of arrays $r$ with $r.q=0$ for at least $\frac{7}{8}|Q|$ choices $q\in Q$.
	
	We now define a sequence of subspaces $0=V(0)\s V(1) \s \dots \s V(m)\s \F^{\lbrack d \rbrack}$ recursively as follows.
	
	Given $V(j)$, if for every $r\in R$ there are at least $\frac{|\cD'|}{2}$ choices $t\in \cD'$ with $rt\in V(j)t$, then we set $m=j$ and terminate. (Here and below, for a subspace $L\s \cG$ and an array $s\in \F^{I}$, we write $Ls$ for the subspace $\{rs: r\in L\}\s \F^{I^c}$.)
	
	Else, we choose some $r\in R$ such that there are at most $\frac{|\cD'|}{2}$ choices $t\in \cD'$ with $rt\in V(j)t$. We set $V(j+1)=V(j)+\text{span}(r)$. Note that $r.(t\otimes s)=(rt).s$ for every $s\in U_t$. If $rt\not \in U_t^{\perp}$, then $(rt).s=0$ holds for only a proportion $1/|\F|\leq 1/2$ of all $s\in U_t$. Thus, as $r\in R$, we have $rt\in U_t^{\perp}$ for at least $\frac{3}{4}|\cD'|$ choices $t\in \cD'$. Moreover, since $rt\in V(j)t$ holds for at most $\frac{|\cD'|}{2}$ choices $t\in \cD'$, it follows that for at least $\frac{|\cD'|}{4}$ choices $t\in \cD'$ we have $rt\in U_t^{\perp}\setminus V(j)t$. Thus, we have $\dim (U_t^{\perp}\cap V(j+1)t)>\dim (U_t^{\perp}\cap V(j)t)$ for at least $\frac{|\cD'|}{4}$ choices $t\in \cD'$.
	
	However, for any $j$ we have $\sum_{t\in \cD'} \dim (U_t^{\perp}\cap V(j)t)\leq \sum_{t\in \cD'} \dim U_t^{\perp}\leq C|\cD'|(\log (2/\d))^4$. Thus, we get $m\leq 4C(\log (2/\d))^4$. Set $V_{\lbrack d\rbrack}=V(m)$. Then $\dim V_{\lbrack d\rbrack}\leq 4C(\log (2/\d))^4$, as claimed.
	
	Now let $r\in R$ be arbitrary. By definition, there are at least $|\cD'|/2$ choices $t\in \cD'$ with $rt\in V_{\lbrack d\rbrack}t$. Then there is some $v\in V_{\lbrack d\rbrack}$ such that $rt=vt$ for at least $\frac{|\cD'|}{2|V_{\lbrack d\rbrack}|}$ choices $t\in \cD'$, and hence also for at least $\frac{\d|\cD|}{4|V_{\lbrack d\rbrack}|}$ choices $t\in \cD$. Note that $\frac{\d}{4|V_{\lbrack d\rbrack}|}\geq \frac{\d}{4|\F|^{4C(\log  2/\d)^4}}$, therefore by Lemma \ref{biastorank}, $r-v$ is $f$-degenerate.
\end{proof}

\begin{definition}
	Let $k$ be a positive integer and let $0\leq \a\leq 1$. Let $Q$ be a multiset with elements chosen from $\cG$ (with arbitrary multiplicity). We say that $Q$ is $(k,\a)$-forcing if the set of all arrays $r\in \cG$ with $r.q=0$ for at least $\a|Q|$ choices $q\in Q$ is contained in a set of the from $\sum_{I\subset \lbrack d\rbrack,I\neq \emptyset} V_I\otimes \F^{I^c}$ for some $V_I\s \F^{I}$ of dimension at most $k$.
\end{definition}

We now turn to the main part of the proof of Lemma \ref{mainlemma}. For each non-empty $I\s \lbrack d-1 \rbrack$ we will construct $Q_I$ as defined in the next result, and (roughly) we will take $Q=Q_{\lbrack d\rbrack}\cup \bigcup_{I\s \lbrack d-1 \rbrack,I\neq \emptyset} Q_I$, where $Q_{\lbrack d\rbrack}$ is provided by Lemma \ref{focusondeg}. The properties that $Q_I$ has are generalisations of the properties that $Q_{\{2\}}$ had in Subsection \ref{subsectionoverviewranklemma}. Accordingly, the next lemma is the generalisation of the discussion in Subsubsection \ref{subsubhowtofind}.

\begin{lemma} \label{claim}
	Let $d\geq 2$ and suppose that Lemma \ref{mainlemma} has been proved for every $d'<d$. Let $\cB'\s \cB$ have $|\cB'|\geq \d|\cB|$ for some $0<\d\leq 1/2$. Let $k\geq G(d-1,\d)$ be arbitrary, let $I\s \lbrack d-1\rbrack, I\neq \emptyset$, and let $W_J\s \F^{J}$ be subspaces of dimension at most $k$ for every $J\s I,J\neq I,J\neq \emptyset$. Then there exist a multiset $Q'$, and a multiset $Q_s$ for each $s\in Q'$ with the following properties.
	
	\begin{enumerate}[label=(\arabic*)]
		\item The elements of $Q'$ are pure tensors chosen from $\bigcap_{J\subset I,J\neq I,J\neq \emptyset}(W_J^{\perp}\otimes \F^{I \setminus J})\s \F^{I}$
		
		\item $Q'$ is $(f_1,1-f_2)$-forcing with $f_1=G(|I|,|\F|^{-2^{d+1}dk})$, $f_2=2^{-3^{d+2}}$
		
		\item For each $s\in Q'$, the elements of $Q_s$ are pure tensors chosen from $\F^{I^c}$
		
		\item For each $s\in Q'$, $Q_s$ is $(f_3,1-f_4)$-forcing with $f_3=G(d-|I|,|\F|^{-2^{3^{d+4}}C(\log (2^{d-1}/\d))^4})$, $f_4=2^{-3^{d+2}}$
		
		\item $\max_{s\in Q'}|Q_s|\leq 2\min_{s\in Q'}|Q_s|$
		
		\item The elements of the multiset $Q_I:=\{s\otimes t: s\in Q',t\in Q_s\}=\bigcup_{s\in Q'}(s\otimes Q_s)$ are chosen from $f_5\cB'-f_5\cB'$ with $f_5=2^{3^{d+3}}$.
	\end{enumerate}
\end{lemma}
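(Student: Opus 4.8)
The plan is to build $Q_I$ by iterating the constructions from the previous subsection, following closely the $d=3$ sketch of Subsubsection \ref{subsubhowtofind}. First I would invoke the hypothesis $|\cB'|\geq \d|\cB|$ to produce, by averaging, a dense subset $S\s \F^{I}$ and, for each $s\in S$, a dense subset $T_s\s \cD$ of the multiset $\cD=\{u^{I^c}:\text{pure tensors in }\F^{I^c}\}$ such that $s\otimes T_s\s \cB'$. Applying Lemma \ref{findsystem} to each $T_s$ (in the $(d-|I|)$-fold tensor product $\F^{I^c}$) yields for each $s\in S$ a system $P_s$ with bounded parameter inside $f_2'T_s-f_2'T_s$, hence the $Q_s$ will be chosen from these systems after taking intersections. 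The key point is that Lemma \ref{intersect} lets us intersect a bounded number of the $P_s$ and still get a system with controlled parameter, so that when we write an arbitrary element $u$ of a low-codimensional subspace $U\s \F^{I}$ as a bounded sum/difference of elements of $S$ (using Lemma \ref{lemmabogolyubov}) and set $Q_u$ to be the corresponding intersection of the $P_s$, we get $u\otimes Q_u\s f_5\cB'-f_5\cB'$, which gives property (6).

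Next I would address properties (1) and (2). To get property (1), instead of applying Bogolyubov directly to $S$ I apply Lemma \ref{lemmabogolyubov} to obtain a subspace inside $2S-2S$ and then intersect it with $\bigcap_{J\subsetneq I,J\neq\emptyset}(W_J^{\perp}\otimes \F^{I\setminus J})$; since each $W_J$ has dimension at most $k$, the codimension of $W_J^{\perp}$ is at most $k$ inside $\F^J$, so (by the same kind of codimension bookkeeping as in Lemma \ref{systemspace}) the intersection has codimension increased by at most $2^{|I|}k \leq 2^d k$. Call the resulting subspace $U'$; then I take $Q'$ to be an appropriate forcing multiset of pure tensors built from $U'$. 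Actually to get that $Q'$ is forcing I use the induction hypothesis (Lemma \ref{mainlemma} for $d'=|I|<d$): a subspace of $\F^I$ of codimension $m$ is dense with density $|\F|^{-m}$, so Lemma \ref{mainlemma} applied inside $\F^I$ produces a $(f_1,1-f_2)$-forcing multiset with $f_1=G(|I|,|\F|^{-m})$ where $m$ is the relevant codimension; one checks $m\leq 2^{d+1}dk$ after accounting for the $C(\log\cdot)^4$ Bogolyubov loss and the $2^dk$ loss from intersecting with the $W_J^{\perp}$ terms. The $f_2$ exponent $2^{-3^{d+2}}$ is obtained by feeding a suitably small $\d$ into the induction; since forcing for $Q'$ only requires $r.q=0$ for a $(1-f_2)$-fraction and the induction gives it for $(1-f_2(|I|))$ which is even smaller, this is fine after relabelling. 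The same input also gives $Q'\s f\cB'-f\cB'$ for some bounded $f$, but the constraint we actually want on the elements of $Q'$ is only that they be pure tensors from the subspace $U'\s\bigcap_J(W_J^{\perp}\otimes\F^{I\setminus J})$, which is property (1).

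For properties (3), (4), (5): for each $s\in Q'$ I write $s$ (as an element of $U'\s 2S-2S$, modulo the extra intersections) as $s = s_1+s_2-s_3-s_4$ with $s_i\in S$ and set $Q_s = P_{s_1}\cap P_{s_2}\cap P_{s_3}\cap P_{s_4}$, a system of bounded parameter by Lemma \ref{intersect}; then I apply the induction hypothesis (Lemma \ref{mainlemma} for $d'=d-|I|<d$) to the dense subset of $\F^{I^c}$ underlying this system to get that $Q_s$ is $(f_3,1-f_4)$-forcing with the stated $f_3,f_4$. The density here is $|\F|^{-g}$ with $g$ the system parameter, which by Lemma \ref{findsystem} and four-fold intersection is $\lesssim C\,4^{d}(\log(2^{d-1}/\d))^4$, matching the exponent $2^{3^{d+4}}C(\log(2^{d-1}/\d))^4$ in $f_3$ after absorbing constants. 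Property (5), that all $|Q_s|$ are within a factor $2$, is arranged by a standard pigeonhole/truncation: partition the (boundedly many) possible values of $|Q_s|$ into dyadic ranges, keep the most popular range, and replace $Q'$ by the corresponding sub-multiset, then truncate each $Q_s$ to a common size within its range; this only shrinks $Q'$ by a bounded factor and keeps it forcing after adjusting $f_1$. Finally property (6) follows as in the $d=3$ sketch: for $s\in Q',t\in Q_s$, writing $s=s_1+s_2-s_3-s_4$ with $s_i\in S$ and noting $t\in\bigcap_i P_{s_i}\s f_2'T_{s_i}-f_2'T_{s_i}$, we get $s_i\otimes t\in f_2'\cB'-f_2'\cB'$, hence $s\otimes t\in 4f_2'\cB'-4f_2'\cB'\s f_5\cB'-f_5\cB'$ with $f_5=2^{3^{d+3}}$ once the bounded constants from the nested inductions are tracked.

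The main obstacle I expect is the bookkeeping: keeping all the constants $f_1,\dots,f_5$ consistent with the precise expressions in the statement while the parameters pass through two separate induction calls (one in $\F^I$ of order $|I|$, one in $\F^{I^c}$ of order $d-|I|$), a Bogolyubov application with its $(\log 1/\d)^4$ loss, the codimension inflation from intersecting with the $2^{|I|}-1$ spaces $W_J^{\perp}\otimes\F^{I\setminus J}$, and the dyadic truncation needed for property (5). Each step is individually routine, but verifying that e.g. the codimension fed into $G(|I|,\cdot)$ really is at most $2^{d+1}dk$ and that the $f_5$ constant does not exceed $2^{3^{d+3}}$ requires careful choices of which $\d'$ to feed the inductive hypotheses and some slack in the exponents $3^{d+j}$; the choices in the statement (with room between consecutive exponents) are exactly what makes this work.
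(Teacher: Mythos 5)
There is a genuine gap at the heart of your construction of $Q'$ (properties (1) and (2)). You treat the set $S$ of good first components as a dense subset of the vector space $\F^I$ and apply Lemma \ref{lemmabogolyubov} to it to get a low-codimensional subspace $U'\s 2S-2S$. But $S$ consists of \emph{pure} tensors of $\F^I$, and the averaging only makes it dense inside the multiset $\cC$ of pure tensors; as a subset of the group $\F^I$ its density is of order $|\F|^{-(\prod_{i\in I}n_i-\sum_{i\in I}n_i)}$, so for $|I|\geq 2$ Bogolyubov returns a codimension bound that grows with the dimensions $n_i$ and is useless here. (The $d=3$ sketch you cite works only because there $I=\{2\}$ has $|I|=1$, so $S$ really is a dense subset of a genuine coordinate space $\F^{n_2}$.) The same problem recurs when you intersect $U'$ with $\bigcap_{J}(W_J^{\perp}\otimes\F^{I\setminus J})$: the codimension of $W_J^{\perp}\otimes\F^{I\setminus J}$ in $\F^I$ is $\dim(W_J)\cdot\prod_{i\in I\setminus J}n_i$, not $O(2^dk)$, so the ``codimension bookkeeping as in Lemma \ref{systemspace}'' does not transfer to a genuine subspace of $\F^I$. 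This is precisely the difficulty the paper's auxiliary machinery is designed to overcome: instead of a subspace, one applies Lemma \ref{findsystem} to $\cC'$ to get an $l$-system $R\s g_2\cC'-g_2\cC'$ (the pure-tensor substitute for Bogolyubov), then Lemma \ref{systemspace} to intersect $R$ with the spaces $W_J^{\perp}\otimes\F^{I\setminus J}$ at a cost of only $2^dk$ in the system parameter, and only then feeds the resulting system $T'$ --- which \emph{is} a dense multiset of pure tensors --- into the induction hypothesis to produce $Q'$. Relatedly, your step ``a subspace of $\F^I$ of codimension $m$ is dense with density $|\F|^{-m}$, so Lemma \ref{mainlemma} applies'' conflates density in $\F^I$ with density in the multiset of pure tensors, which is what Lemma \ref{mainlemma} actually requires; that particular point could be repaired (the pure tensors in a codimension-$m$ subspace do have density $\geq|\F|^{-m}$ among pure tensors, by nonnegativity of the bias of multilinear forms), but it is moot because the subspace itself is not available.

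Two smaller points. First, because $Q'$ must come from the induction applied to $T'\s g_2\cC'-g_2\cC'$, its elements are $\pm$-combinations of up to $2g_2g_4\leq 2^{3^{d+3}}$ elements of $\cC'$, not of $4$ elements; accordingly $Q_s$ has to be an intersection of up to $2\cdot 2^{3^{d+3}}$ of the systems $R_{s_i}$ (Lemma \ref{intersect}), and it is this larger intersection that produces the exponent $2^{3^{d+4}}C(\log(2^{d-1}/\d))^4$ in $f_3$. Second, your fix for property (5) --- discarding part of $Q'$ and truncating the $Q_s$ --- does not obviously preserve the forcing properties, since removing elements from a forcing multiset can create new arrays orthogonal to the required fraction; the paper instead equalizes the sizes by \emph{duplicating} elements of each $Q_s$, which leaves forcing untouched.
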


\begin{proof}
	By symmetry, we may assume that $I=\lbrack a\rbrack$ for some $1\leq a \leq d-1$. Let $\cC$ be the multiset $\{u_1\otimes \dots \otimes u_a: u_i\in \F^{n_i}\}$ and let $\cD$ be the multiset $\{u_{a+1}\otimes \dots \otimes u_d: u_i\in \F^{n_i}\}$. For each $s\in \cC$, let $\cD_s=\{t\in \cD: s\otimes t\in \cB'\}$. Also, let $\cC'=\{s\in \cC: |\cD_s|\geq \frac{\d}{2}|\cD|\}$. Clearly, $|\cC'|\geq \frac{\d}{2}|\cC|$. By Lemma \ref{findsystem}, there exists a $g_1$-system $R$ (with respect to $\F^{I}$) with elements chosen from $g_2\cC'-g_2\cC'$ with $g_1=C\cdot 4^d(\log(2^{d-1}/\d))^4$ and $g_2=4^d$. By Lemma \ref{systemspace}, $R\cap \bigcap_{J\subset I,J\neq I,J\neq \emptyset} (W_J^{\perp}\otimes \F^{I\setminus J})$ contains a $g_3$-system $T'$ for $g_3=C\cdot 4^d(\log(2^{d-1}/\d))^4+2^dk$. Now $|T'|\geq |\F|^{-dg_3}|\cC|$. By Lemma \ref{mainlemma} (applied to $a$ in place of $d$), it follows that there exists a multiset $Q'$ whose elements are pure tensors chosen from $g_4T'-g_4T'$ and which is $(g_5,1-g_6)$-forcing for $g_4=2^{3^{a+3}}\leq 2^{3^{d+2}}$, $g_5=G(a,|\F|^{-dg_3})$ and $g_6=2^{-3^{a+3}}\geq 2^{-3^{d+2}}$. Note that since $\d\leq 1/2$, we have $C\cdot 4^d(\log (2^{d-1}/\d))^4= C\cdot 4^d (d-1+\log (1/\d))^4\leq C\cdot 4^d (d\log (1/\d))^4$. But this is at most as $G(d-1,\d)\leq k$, so $g_3\leq 2\cdot 2^dk$, therefore $Q'$ satisfies (1) and (2) in the statement of this lemma.
	
	By Lemma \ref{findsystem}, for each $s\in \cC'$ there exists a $g_7$-system $R_s$ (with respect to $\F^{I^c}$) contained in $g_8\cD_s-g_8\cD_s$, where $g_7=C\cdot 4^d(\log (2^{d-1}/\d))^4$ and $g_8=4^d$. For every $s\in Q'$, choose $s_1,\dots,s_{l+l'}\in \cC'$ with $l,l'\leq 2^{3^{d+3}}$ such that $s=s_1+\dots+s_l-s_{l+1}-\dots-s_{l+l'}$ (this is possible, since the elements of $Q'$ are chosen from $2g_2g_4\cC'-2g_2g_4\cC'$ and $2g_2g_4\leq 2^{3^{d+3}}$), and let $P_s=\bigcap_{i\leq l+l'}R_s$. By Lemma \ref{intersect}, $P_s$ contains a $g_9$-system with $g_9=2\cdot 2^{3^{d+3}}\cdot C \cdot 4^d(\log (2^{d-1}/\d))^4$, therefore $|P_s|\geq g_{10}|\cD|$ for $g_{10}=|\F|^{-dg_9}\geq |\F|^{-2^{3^{d+4}}C(\log(2^{d-1}/\d))^4}$. By Lemma \ref{mainlemma} (applied to $d-a$ in place of $d$), for every $s\in Q'$ there exists a multiset $Q_s$ consisting of pure tensors with elements chosen from $g_{11}P_s-g_{11}P_s$ which is $(g_{12},1-g_{13})$-forcing for $g_{11}=2^{3^{d-a+3}}\leq 2^{3^{d+2}}$, $g_{12}=G(d-a,|\F|^{-dg_9})\leq G(d-a,|\F|^{-2^{3^{d+4}}C(\log (2^{d-1}/\d))^4})$ and $g_{13}=2^{-3^{d-a+3}}\geq 2^{-3^{d+2}}$. Notice that if we repeat every element of $Q_s$ the same number of times, then the multiset obtained is still $(g_{12},1-g_{13})$-forcing, so we may assume that $\max_{s\in Q'}|Q_s|\leq 2\min_{s\in Q'}|Q_s|$. Thus, the $Q_s$ satisfy (3), (4) and (5).
	
	Define $Q_I=\{s\otimes t:s\in Q',t\in Q_s\}=\bigcup_{s\in Q'}(s\otimes Q_s)$. Note that as $R_s\subset g_8\cD_s-g_8\cD_s$ for all $s\in \cC'$, we have $s\otimes R_s\subset g_8\cB'-g_8\cB'$ for all $s\in \cC'$. But the elements of $Q'$ are chosen from $2g_2g_4\cC'-2g_2g_4\cC'$, so $s\otimes P_s\s 4g_2g_4g_8\cB'-4g_2g_4g_8\cB'$ for all $s\in Q'$. Finally, the elements of $Q_s$ are chosen from $g_{11}P_s-g_{11}P_s$, so the elements of $s\otimes Q_s$ are chosen from $8g_2g_4g_8g_{11}\cB'-8g_2g_4g_8g_{11}\cB'$ for every $s\in Q'$. Since $8g_2g_4g_8g_{11}\leq 8\cdot (4^d)^2\cdot (2^{3^{d+2}})^2=2^{3+4d+2\cdot 3^{d+2}}\leq 2^{3^{d+3}}$, property (6) is satisfied.
\end{proof}

The next lemma is the last ingredient of the proof. It is a generalisation of the discussion in Subsubsection \ref{subsubwhyworks}. Given a tensor $r\in V_{\lbrack d\rbrack}+\sum_{I\s \lbrack d-1\rbrack, I\neq \emptyset} \F^{I}\otimes H_{I^c}(r)$, we turn the terms $\F^{I}\otimes H_{I^c}(r)$ one by one into terms $V_I\otimes \F^{I^c}+\F^{I}\otimes V_{I^c}$ where $V_J$ are small and do not depend on $r$. (Note that this is not quite the same as our approach to the case $d=3$.) As briefly explained in Subsubsection \ref{subsubhowextend}, the order in which the various $I$ are considered is important: we define $\prec$ to be any total order on the set of non-empty subsets of $\lbrack d-1 \rbrack$ such that if $J\subsetneq I$ then $J\prec I$. It is worth noting that unlike in the $d=3$ case, the subspaces $V_J,V_{J^c}$ with $J\prec I$ are allowed to change when $V_I$ and $V_{I^c}$ get defined (although in fact the $V_{J^c}$ will not change, and the $V_J$ change only for $J\subsetneq I$). All we require is that they do not become much larger.

\begin{lemma} \label{keylemma}
	Let $d\geq 2$, $0<\d\leq 1/2$ and $k\geq G(d-1,\d)^{2}$. Let $I\s \lbrack d-1\rbrack, I\neq \emptyset$ and let $W_J\s \F^{J},W_{J^c}\s \F^{J^c}$ be subspaces of dimension at most $k$ for every $J\prec I$. Moreover, let $W_{\lbrack d\rbrack}\s \F^{\lbrack d\rbrack}$ have dimension at most $k$. Suppose that $Q',Q_s$ (and $Q_I$) have the six properties described in Lemma \ref{claim}. Then any array $$r\in W_{\lbrack d\rbrack}+ \sum_{J\prec I} (W_J\otimes \F^{J^c}+\F^{J}\otimes W_{J^c})+\sum_{J\succeq I} \F^{J}\otimes H_{J^c}(r)$$ with $\dim(H_{J^c}(r))\leq k$ and the property that $r.q=0$ for at least $(1-\frac{1}{4}(2^{-3^{d+2}})^2)|Q_I|$ choices $q\in Q_I$ is contained in $$W_{\lbrack d \rbrack}+\sum_{J\preceq I} (U_J\otimes \F^{J^c}+\F^{J}\otimes U_{J^c})+\sum_{J\succ I} \F^{J}\otimes K_{J^c}(r)$$ for some $U_J\s \F^{J},U_{J^c}\s \F^{J^c}$ not depending on $r$ and some $K_{J^c}(r)\s \F^{J^c}$ possibly depending on $r$, all of dimension at most $k^{2c_2(|I|)}$.
\end{lemma}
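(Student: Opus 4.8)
The plan is to follow the strategy sketched in Subsubsection \ref{subsubwhyworks}, but carried out at the level of generality of the lemma. Fix $r$ of the stated form with $r.q=0$ for almost all $q\in Q_I$. The first step is to decompose $r=r_1+r_2+r_3+r_4$ where $r_4\in\sum_{J=I}\F^{I}\otimes H_{I^c}(r)=\F^{I}\otimes H_{I^c}(r)$; $r_3\in W_{\lbrack d\rbrack}$; $r_2$ collects the terms $W_J\otimes\F^{J^c}$ with $J\prec I$, $J\not\subset I$, together with $\sum_{J\succ I}\F^{J}\otimes H_{J^c}(r)$ (multiplying these by a pure tensor $u\in\F^{I}$ yields something of small partition rank, using that $\dim W_J,\dim H_{J^c}\le k$ and $|I|\le d-1$), plus the terms $\F^{J}\otimes W_{J^c}$ with $J\prec I$ (here $u\in\F^{I}$ acts only partially on $\F^{J}$ unless $J\subset I^c$, and in all cases $\F^{J}\otimes W_{J^c}$ times $u$ lands in $\F^{\text{something}}\otimes W_{J^c}$, hence has small rank since $\dim W_{J^c}\le k$); and $r_1$ collects the terms $W_J\otimes\F^{J^c}$ with $J\subsetneq I$. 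The point of property (1) of Lemma \ref{claim} is precisely that every element of $Q'$ — hence every $q=s\otimes t\in Q_I$ — lies in $\bigcap_{J\subsetneq I,J\neq\emptyset}(W_J^{\perp}\otimes\F^{I\setminus J})\otimes\F^{I^c}$, so $r_1.q=0$ for all $q\in Q_I$. Therefore $(r_2+r_3+r_4).q=0$ for almost all $q\in Q_I$.

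The second step uses that the $Q_s$ have roughly equal size (property (5)) together with an averaging argument: since $Q_I=\bigcup_{s\in Q'}(s\otimes Q_s)$ and $r.q=0$ for at least a $(1-\frac14(2^{-3^{d+2}})^2)$-fraction of $q\in Q_I$, for at least a $(1-\frac12\cdot 2^{-3^{d+2}})$-fraction of $s\in Q'$ we have $(ru).t=r.(u\otimes t)$... more precisely, for many $s\in Q'$, $(r_2+r_3+r_4)s$ satisfies $((r_2+r_3+r_4)s).t=0$ for at least a $(1-2^{-3^{d+2}})=(1-f_4)$-fraction of $t\in Q_s$. Since $Q_s$ is $(f_3,1-f_4)$-forcing, for each such $s$ we get $(r_2+r_3+r_4)s\in\sum_{\emptyset\neq L\s I^c}W_L(s)\otimes\F^{I^c\setminus L}$ with $\dim W_L(s)\le f_3$, a space not depending on $r$. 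As $r_2s$ has small partition rank and $r_3s\in W_{\lbrack d\rbrack}s$ (independent of $r$, dimension $\le k$), we deduce that for many $s\in Q'$, $r_4s$ is "close" to the $r$-independent space $Z_0(s):=W_{\lbrack d\rbrack}s+\sum_L W_L(s)\otimes\F^{I^c\setminus L}$, in the sense that $r_4s-z$ has small partition rank for some $z\in Z_0(s)$.

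The third step is the iterated-absorption argument. Run the recursion on subspaces $0=Z(0)\s Z(1)\s\dots\s Z(m)\s\F^{I^c}$: given $Z(j)$, if some $r\in R$ has $r_4s$ far from $Z(j)$ for many $s\in Q'$, adjoin $H_{I^c}(r)$ (or rather a witnessing subspace) to get $Z(j+1)$; a dimension/codimension count as in the proof of Lemma \ref{focusondeg} bounds $m$ polynomially in $k$ (this is where the $k^{2c_2(|I|)}$-type bound comes from, since $f_3,f_5$ and the partition-rank bounds all feed in multiplicatively). Once the recursion stops, for every $r\in R$ and almost every $s\in Q'$, $r_4s$ lies in the $r$-independent space $Z(m)$ up to small partition rank. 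The final step then mirrors the end of Subsubsection \ref{subsubwhyworks}: writing $r_4=\sum_i v_i\otimes t_i$ with $\{t_i\}$ a basis of $H_{I^c}(r)$ split into those $t_i$ that are close to $Z(m)$ and those that are not, the "far" part forces the corresponding $v_i$ to lie in $(Q')^{\perp}\cap\F^{I}$ — here $(Q')^{\perp}$ is small because $Q'$ is $(f_1,1-f_2)$-forcing (property (2)), so its orthogonal complement in the relevant sense has bounded dimension — giving a term in $U_I\otimes\F^{I^c}$ with $U_I$ independent of $r$; the "close" part lands in $\F^{I}\otimes U_{I^c}$ with $U_{I^c}$ a bounded enlargement of $Z(m)$, plus a small-partition-rank correction absorbed into $\sum_{J\succ I}\F^{J}\otimes K_{J^c}(r)$ and possibly into enlarging $W_J$ for $J\subsetneq I$ (which is allowed). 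Collecting everything and checking all dimensions are at most $k^{2c_2(|I|)}$ completes the argument.

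I expect the main obstacle to be the bookkeeping in the third and fourth steps: tracking that every time we "absorb" a small-partition-rank tensor or an enlarged witnessing subspace, the dimensions grow only polynomially (with exponent governed by $c_2(|I|)$), and that the various "closeness" parameters (the thresholds like $\frac14(2^{-3^{d+2}})^2$, $f_2$, $f_4$) compose correctly through the three nested averaging arguments so that "$r.q=0$ for almost all $q\in Q_I$" survives down to "$((r_2+r_3+r_4)s).t=0$ for almost all $t\in Q_s$, for almost all $s$". The forcing properties and the system lemmas do the structural work; the delicate part is purely quantitative propagation of the error fractions and dimension bounds, together with correctly invoking Lemma \ref{biastorank} (hence Lemma \ref{mainlemma} for $d'<d$) at each point where we convert a "density" statement into a "degeneracy" statement.
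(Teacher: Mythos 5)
Your outline follows the same architecture as the paper's argument (decompose $r$, average over $s\in Q'$ using property (5), apply the forcing property of $Q_s$, build an $r$-independent space $Z$ by an iterated potential argument, then split a basis of $H_{I^c}(r)$ and use property (2) for the ``far'' coefficients), but there is a genuine gap in your decomposition, and it propagates into your second step. You put every term $\F^{J}\otimes W_{J^c}$ with $J\prec I$ into $r_2$ and claim that after contracting with a pure tensor $s\in \F^{I}$ it ``has small rank since $\dim W_{J^c}\le k$''. That inference is only valid when $J\not\s I$: writing $s=s_1\otimes s_2$ with $s_2\in\F^{I\cap J^c}$, the contraction lands in $\F^{J\setminus I}\otimes (W_{J^c}s_2)$, and since $J\setminus I$ is a nonempty subset of $I^c$ not containing $d$ and $\dim(W_{J^c}s_2)\le k$, this is indeed $k$-degenerate. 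But for $J\subsetneq I$ (and every such $J$ has $J\prec I$), the contraction lands in $W_{J^c}s_2\s\F^{I^c}$, which is merely a subspace of dimension at most $k$; low dimension does not imply low partition rank, so these contributions are not small-rank errors. Hence your step-2 claim---that for many $s$ the tensor $r_4s$ is within small partition rank of $Z_0(s)=W_{\lbrack d\rbrack}s+\sum_L W_L(s)\otimes\F^{I^c\setminus L}$---fails as stated, because the uncontrolled terms $(\F^{J}\otimes W_{J^c})s$ with $J\subsetneq I$ are unaccounted for. The paper's remedy is precisely to separate these terms (together with $W_{\lbrack d\rbrack}$) into a third piece $r_3$ and to absorb $(\F^{J}\otimes W_{J^c})s$, a space of dimension at most $k$, into the $s$-dependent target space $T(s)=V_{I^c}(s)+\sum_{J\subset I, J\neq I}(\F^{J}\otimes W_{J^c})s$ that drives the potential-function count. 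Relatedly, your $Z_0(s)$ retains the full terms $W_L(s)\otimes\F^{I^c\setminus L}$ for all $L$, which have huge dimension; for the counting to work one keeps only the top piece $V_{I^c}(s)$ in the target space and treats the terms with $L\neq I^c$ as degenerate errors.

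A secondary imprecision: in your last step the coefficients of the ``far'' basis vectors satisfy $s'_j.q=0$ for almost all $q\in Q'$, and property (2) then places them in $\sum_{J\s I, J\neq\emptyset} L_J\otimes\F^{I\setminus J}$ with each $L_J$ of bounded dimension---not in a bounded-dimensional subspace ``$(Q')^{\perp}$'' of $\F^{I}$. Consequently the far part contributes terms $L_J\otimes\F^{J^c}$ for every nonempty $J\s I$, which is the actual reason the spaces $U_J$ with $J\subsetneq I$ must be enlarged; attributing that enlargement to the close part mislocates the source of the growth, although the allowance $k^{2c_2(|I|)}$ in the statement absorbs it either way.
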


\begin{proof}
	By (4) in Lemma \ref{claim}, for every $s\in Q'$ there exist subspaces $V_J(s)\s \F^{J}$ for every $J\s I^c,J\neq \emptyset$, with dimension at most
	$g_1=G(d-1,|\F|^{-2^{3^{d+4}}C(\log 2^{d-1}/\d)^4})$
	such that the set of arrays $t\in \F^{I^c}$ with $t.q=0$ for at least $(1-g_2)|Q_s|$ choices $q\in Q_s$ is contained in $\sum_{J\s I^c,J\neq \emptyset} V_J(s)\otimes \F^{I^c\setminus J}$, where $g_2=2^{-3^{d+2}}$. Note, for future reference, that
	\begin{align*}
	g_1&=G(d-1,|\F|^{-2^{3^{d+4}}C(\log 2^{d-1}/\d)^4})= ((\log |\F|)^2 c_1(d-1)2^{3^{d+4}}C(\log 2^{d-1}/\d)^4)^{c_2(d-1)} \\
	&\leq ((\log |\F|)^2 c_1(d-1)2^{3^{d+4}}C(d\log 1/\d)^4)^{c_2(d-1)}\leq ((\log |\F|)^2 (c_1(d-1))^2(\log 1/\d)^4)^{c_2(d-1)} \\
	&\leq G(d-1,\d)^{4}\leq k^2.
	\end{align*}
	
	Let $R$ consist of the set of arrays with $r\in W_{\lbrack d\rbrack}+ \sum_{J\prec I} (W_J\otimes \F^{J^c}+\F^{J}\otimes W_{J^c})+\sum_{J\succeq I} \F^{J}\otimes H_{J^c}(r)$ with $\dim(H_{J^c}(r))\leq k$ and the property that $r.q=0$ for at least $(1-\frac{1}{4}(2^{-3^{d+2}})^2)|Q_I|$ choices $q\in Q_I$.
	
	Let $r\in R$. Then by averaging and using (5) from Lemma \ref{claim}, for at least $(1-g_3)|Q'|$ choices $s\in Q'$ we have $r.(s\otimes t)=0$ for at least $(1-g_2)|Q_s|$ choices $t\in Q_s$, where $g_3=\frac{1}{2}2^{-3^{d+2}}$. Thus, (noting that $r.(s\otimes t)=(rs).t$), $rs\in \sum_{J\s I^c,J\neq \emptyset} V_J(s)\otimes \F^{I^c\setminus J}$ holds for at least $(1-g_3)|Q'|$ choices $s\in Q'$. Let $Q'(r)$ be the submultiset of $Q'$ consisting of those $s\in Q'$ for which $rs\in \sum_{J\s I^c,J\neq \emptyset} V_J(s)\otimes \F^{I^c\setminus J}$. Then we have $|Q'(r)|\geq (1-g_3)|Q'|$.
	
	\smallskip
	
	Note that we can write $r=r_1+r_2+r_3+r_4$ where $$r_1\in \sum_{J\subset I,J\neq I,J\neq \emptyset} W_J\otimes \F^{J^c},$$ $$r_2\in \sum_{J\prec I, J\not \subset I} (W_J\otimes \F^{J^c}+\F^{J}\otimes W_{J^c})+\sum_{J\succ I} \F^{J}\otimes H_{J^c}(r),$$ $$r_3\in W_{\lbrack d\rbrack}+\sum_{J\subset I,J\neq I,J\neq \emptyset} \F^{J}\otimes W_{J^c},$$ $$r_4\in \F^{I}\otimes H_{I^c}(r).$$
	By (1) in Lemma \ref{claim}, the elements of $Q'$ belong to $\bigcap_{J\subset I,J\neq I,J\neq \emptyset}(W_J^{\perp}\otimes \F^{I \setminus J})$, so we have $r_1s=0$ for every $s\in Q'$.
	
	Note that for every pure tensor $s\in \F^{I}$, $r_2s$ is $2^{d}k$-degenerate. Indeed, for any $J\s \lbrack d-1 \rbrack$ with $J\not \s I$ there are some $s_1\in \F^{I\cap J},s_2\in \F^{I\cap J^c}$ with $s=s_1\otimes s_2$. Then $(W_J\otimes \F^{J^c})s\s (W_Js_1)\otimes \F^{I^{c}\setminus J}$. Since $\dim(W_Js_1)\leq k$, $J\not \s I$ and $d\in I^c\setminus J$, any tensor in $(W_Js_1)\otimes \F^{I^{c}\setminus J}$ is $k$-degenerate. Similarly, any tensor in $(\F^{J}\otimes W_{J^c})s$ or $(\F^{J}\otimes H_{J^c}(r))s$ is also $k$-degenerate, so $r_2s$ is indeed $2^dk$-degenerate. Since $Q'$ consists of pure tensors, this holds for every $s\in Q'$.
	
	%Since $\dim(W_J),\dim(W_{J^c}),\dim(H_{J^c}(r))\leq k$, $r_2s$ is $2^{d}k$-degenerate.
	
	Also, $r_3s\in \sum_{J\subset I,J\neq I} ((\F^{J}\otimes W_{J^c})s)$. It follows that for every $s\in Q'(r)$, there exists some $t(s)\in V_{I^c}(s)+\sum_{J\subset I,J\neq I} ((\F^{J}\otimes W_{J^c})s)$ such that $r_4s-t(s)$ is $g_4$-degenerate for $g_4=g_1+2^dk$ (we have used that $\dim (V_{J}(s))\leq g_1$). To ease the notation, write $T(s)$ for the space $V_{I^c}(s)+\sum_{J\subset I,J\neq I} ((\F^{J}\otimes W_{J^c})s)$. We claim that the dimension of $T(s)$ is at most $g_4=g_1+2^dk$. Indeed, $\dim(V_{I^c})\leq g_1$, so it suffices to prove that $\dim((\F^{J}\otimes W_{J^c})s)\leq k$ for every $J\s I,J\neq I$. Since $s\in Q'$, $s$ is a pure tensor, so for any such $J$ we have $s=s_1\otimes s_2$ for some $s_1\in \F^{J},s_2\in \F^{I\setminus J}$. But then $(\F^{J}\otimes W_{J^c})s\s W_{J^c}s_2$, which has dimension at most $\dim(W_{J^c})\leq k$.
	
	Let us define a sequence of subspaces $0=Z(0)\s Z(1)\s \dots \s Z(m)\s \F^{I^c}$ recursively as follows. Given $Z(j)$, if for all $r\in R$ we have that for all but at most $2g_3|Q'|$ choices $s\in Q'$ there is some $z\in Z(j)$ such that $r_4s-z$ is $(g_4+1)g_4$-degenerate, then set $m=j$ and terminate.
	
	Else, choose some $r\in R$ such that for at least $2g_3|Q'|$ choices $s\in Q'$ there is no $z\in Z(j)$ such that $r_4s-z$ is $(g_4+1)g_4$-degenerate, and set $Z(j+1)=Z(j)+H_{I^c}(r)$. Recall that for every $s\in Q'(r)$, and in particular, for at least $(1-g_3)|Q'|$ choices $s\in Q'$, there exists some $t(s)\in T(s)$ such that $r_4s-t(s)$ is $g_4$-degenerate. So for at least $g_3|Q'|$ choices $s\in Q'$ there is some $t(s)\in T(s)$ such that $r_4s-t(s)$ is $g_4$-degenerate, but there is no $z\in Z(j)$ such that $r_4s-z$ is $(g_4+1)g_4$-degenerate. In this case there is no $z\in Z(j)$ such that $z-t(s)$ is $g_4^2$-degenerate. On the other hand, since $r_4s\in H_{I^c}(r)\s Z(j+1)$, there is some $z\in Z(j+1)$ such that $z-t(s)$ is $g_4$-degenerate. For any $i$, let $K(i,s)$ be the subspace of $T(s)$ spanned by those $t\in T(s)$ for which there is some $z\in Z(i)$ with $z-t$ being $g_4$-degenerate. Since the dimension of $T(s)$ is at most $g_4$, we have $t(s)\not \in K(j,s)$, else there would exist some $z\in Z(j)$ such that $z-t(s)$ is $g_4^2$-degenerate. On the other hand, $t(s)\in K(j+1,s)$. Thus, $\dim K(j+1,s)>\dim K(j,s)$. This holds for at least $g_3|Q'|$ choices $s\in Q'$, so
	$$\sum_{s\in Q'} \dim K(j+1,s)\geq g_3|Q'|+\sum_{s\in Q'} \dim K(j,s).$$
	
	Since $K(m,s)\s T(s)$, we have $\dim K(m,s)\leq g_4$. Thus,
	$$|Q'|g_4\geq \sum_{s\in Q'} \dim K(m,s)\geq mg_3|Q'|,$$
	so $m\leq \frac{g_4}{g_3}$ and $\dim Z(m)\leq \frac{kg_4}{g_3}$. Write $Z=Z(m)$.
	
	Now let $r\in R$. Let $X(r)$ be the set consisting of those $x\in H_{I^c}(r)$ for which there is some $z\in Z$ with $x-z$ being $(g_4+1)g_4$-degenerate. Then $r_4s\in X(r)$ apart from at most $2g_3|Q'|$ choices $s\in Q'$. Let $t_1,\dots,t_{\a}$ be a maximal linearly independent subset of $X(r)$ and extend it to a basis $t_1,\dots,t_{\a},t'_1,\dots,t'_{\beta}$ for $H_{I^c}(r)$. Now if a linear combination of $t_1,\dots,t_{\a},t'_1,\dots,t'_{\beta}$ is in $X(r)$, then the coefficients of $t'_1,\dots,t'_{\beta}$ are all zero. Write $r_4=\sum_{i\leq \a}s_i\otimes t_i+\sum_{j\leq \beta}s'_j\otimes t'_j$ for some $s_i,s'_j\in \F^{I}$. Since $r_4q\in X(r)$ for at least $(1-2g_3)|Q'|=(1-2^{-3^{d+2}})|Q'|$ choices $q\in Q'$, we have, for all $j$, that $s'_j.q=0$ for at least $(1-2^{-3^{d+2}})|Q'|$ choices $q\in Q'$. Thus, by (2) in Lemma \ref{claim} there exist subspaces $L_J\s \F^{J}$ ($J\s I,J\neq \emptyset$) not depending on $r$, and of dimension at most $G(|I|,|\F|^{-2^{d+1}dk})$ such that $s'_j\in \sum_{J\s I,J\neq \emptyset} L_J\otimes \F^{I\setminus J}$ for all $j$. Thus, $r_4\in \sum_{i\leq \a}s_i\otimes t_i+\sum_{J\s I,J\neq \emptyset} L_J\otimes \F^{J^c}$. Moreover, for every $i\leq \a$, we have $t_i\in X(r)$, so there exist $z_i\in Z$ such that $t_i-z_i$ is $(g_4+1)g_4$-degenerate. It follows that $r_4\in \F^{I}\otimes Z+\sum_{J\supset I,J\neq I,J\subset \lbrack d-1\rbrack} \F^{J}\otimes K'_{J^c}(r)+\sum_{J\s I,J\neq \emptyset} L_J\otimes \F^{J^c}$ for some $K'_{J^c}(r)\s \F^{J^c}$ of dimension at most $\a\cdot (g_4+1)g_4\leq k\cdot (g_4+1)g_4$.

	We claim that $\dim(Z),\dim(K'_{J^c})$ and $\dim(L_J)$ are all bounded by $k^{2c_2(|I|)}-k$.
	
	Firstly, note that $g_4=g_1+2^dk\leq k^2+2^dk\leq 2k^2$.
	
	Now $\dim(K'_{J^c})\leq k(g_4+1)g_4\leq k^{6}\leq k^{2c_2(|I|)}-k$. Also, $\dim(Z)\leq \frac{kg_4}{g_3}\leq k^4\leq k^{2c_2(|I|)}-k$. Finally, 
	\begin{align*}
	\dim(L_J)&\leq G(|I|,|\F|^{-2^{d+1}dk})=((\log |\F|)^2c_1(|I|)(2^{d+1}dk))^{c_2(|I|)}\leq ((\log |\F|)^2c_1(d-1)^2k)^{c_2(|I|)} \\
	&\leq G(d-1,\d)^{2}k^{c_2(|I|)}\leq k^{c_2(|I|)+1}\leq k^{2c_2(|I|)}-k
	\end{align*}
	This completes the proof of the claim and the lemma.
\end{proof}

\begin{proof}[Proof of Lemma \ref{mainlemma}]
	As stated earlier, the proof goes by induction on $d$. For $d=1$, by Lemma \ref{lemmabogolyubov} there is a subspace $U\s \F^{n_1}$ of codimension at most $C(\log 1/\d)^4$ contained in $2\cB'-2\cB'$. Choose $Q=U$. Now if $r.q=0$ for at least $(1-2^{-3^4})|Q|$ choices $q\in Q$ then the same holds for all $q\in Q$, therefore $r\in U^{\perp}$, but $\dim(U^{\perp})\leq C(\log 1/\d)^4$, so the case $d=1$ is proved.
	
	\smallskip  
	
	Now let us assume that $d\geq 2$. Extend the total order $\prec$ defined above such that it now contains $\emptyset$ which has $\emptyset \prec I$ for every non-empty $I\s \lbrack d-1\rbrack$. Say $\emptyset=I_0\prec I_1\prec I_2\prec \dots \prec I_{2^{d-1}-1}$ where $\{I_0,\dots,I_{2^{d-1}-1}\}=P(\lbrack d-1\rbrack)$.
	
	\smallskip
	
	\noindent \emph{Claim.} For every $0\leq i\leq 2^{d-1}-1$ there exists a multiset $Q_{I_i}$ of pure tensors with elements chosen from $2^{3^{d+3}}\cB'-2^{3^{d+3}}\cB'$, and subspaces $W_{I_j}(i)\s \F^{I_j}$, $W_{(I_j)^c}(i)\s \F^{(I_j)^c}$ for every $j\leq i$ (for $j=0$, we only require $W_{\lbrack d\rbrack}(i)$ and not $W_{\emptyset}(i)$) with the following properties. The dimension of each of these spaces is at most $g_1(i)=G(d-1,\d)^{\a(i)}$, where $\a(i)=4 \cdot \Pi_{1\leq j\leq i} \hspace{1mm} 2c_2(|I_j|)$
	. Moreover, if $r\in \cG$ has $r.q=0$ for at least $(1-\frac{1}{4}(2^{-3^{d+2}})^2)|Q_{I_j}|$ choices $q\in Q_{I_j}$ for all $j\leq i$, then $r\in W_{\lbrack d\rbrack}(i)+ \sum_{1\leq j\leq i} (W_{I_j}(i)\otimes \F^{(I_j)^c}+\F^{I_j}\otimes W_{(I_j)^c}(i))+\sum_{j>i}\F^{I_j}\otimes H_{(I_j)^c}(i,r)$ holds for some $H_{(I_j)^c}(i,r)$ possibly depending on $r$ and of dimension at most $g_1(i)$.
	
	\smallskip
	
	\noindent \emph{Proof of Claim.} This is proved by induction on $i$. For $i=0$, by Lemma \ref{focusondeg}, there exist $Q_{\emptyset}\s 2\cB'-2\cB'$ consisting of pure tensors and $V_{\lbrack d\rbrack}\s \F^{\lbrack d\rbrack}$ of dimension at most $4C(\log (2/\d))^4\leq 4C(2\log (1/\d))^4\leq G(d-1,\d)^4$ such that if $r.q=0$ for at least $\frac{7}{8}|Q_{\emptyset}|$ choices $q\in Q_{\emptyset}$, then $r$ can be written as $r=x+y$ where $x\in V_{\lbrack d\rbrack}$ and $y$ is $g_2$-degenerate for $g_2=G(d-1,\frac{\d}{4|\F|^{4C(\log 2/\d)^4}})$. Since
	\begin{align*}
	g_2&\leq G(d-1,|\F|^{-5C(\log 2/\d)^4})=((\log |\F|)^2c_1(d-1)5C(\log (2/\d))^4)^{c_2(d-1)} \\
	&\leq((\log |\F|)^2c_1(d-1)5C(2\log (1/\d))^4)^{c_2(d-1)}\leq G(d-1,\d)^{4},
	\end{align*}
	we can take $W_{\lbrack d\rbrack}(0)=V_{\lbrack d\rbrack}$.
	
	Once we have found suitable sets $W_{I_j}(i-1)$ and $W_{(I_j)^c}(i-1)$ for all $j\leq i-1$, we can apply Lemmas \ref{claim} and \ref{keylemma} with $I=I_i$ and $k=g_1(i-1)$ to find a suitable $Q_{I_i}$, $W_{I_j}(i)$ and $W_{(I_j)^c}(i)$ for all $j\leq i$, and the claim is proved, since $g_1(i)=g_1(i-1)^{2c_2(|I_i|)}$.
	
	\smallskip
	
	Now, after taking several copies of each $Q_{I}$, we may assume that additionally $\max_I |Q_I|\leq 2\min_{I} |Q_I|$. Let $Q=\bigcup_{I\s \lbrack d-1\rbrack}Q_I$ and suppose that $r.q=0$ for at least $(1-2^{-3^{d+3}})|Q|$ choices $q\in Q$. Since $2^{-3^{d+3}}\leq \frac{1}{2\cdot 2^{d-1}}\cdot \frac{1}{4}(2^{-3^{d+2}})^2$, it follows that for every $I\s \lbrack d-1\rbrack$ we have $r.q=0$ for at least $(1-\frac{1}{4}(2^{-3^{d+2}})^2)|Q_I|$ choices $q\in Q_I$. By the Claim with $i=2^{d-1}-1$, we get that $r\in \sum_{I\s \lbrack d\rbrack, I\neq \emptyset} V_I\otimes \F^{I^c}$ for some $V_I\s \F^{I}$ not depending on $r$, and of dimension at most $g_1(2^{d-1}-1)=G(d-1,\d)^{\a(2^{d-1}-1)}$. Note that $$\a(2^{d-1}-1)=4\cdot 2^{2^{d-1}-1}\cdot \Pi_{1\leq i\leq d-1} c_2(i)^{{d-1 \choose i}}.$$
	But $$\Pi_{1\leq i\leq d-1} c_2(i)^{{d-1 \choose i}}=4^{\sum_{1\leq i\leq d-1} {d-1 \choose i}i^{i}}\leq 4^{\sum_{1\leq i\leq d-1} {d-1 \choose i}(d-1)^{i}}\leq 4^{(d-1+1)^{d-1}}=4^{d^{d-1}}.$$
	Thus, $\a(2^{d-1}-1)\leq 4^{d^d}$.
	This completes the proof of the lemma.
\end{proof}

%\newpage %% AUTHOR: please comment out this line.  It serves only
%%   to demonstrate both types of header line in daj-template.pdf

%%% AUTHOR: optional appendix here
%\appendix %% you may comment this out if no Appendix
%\section*{Appendix}
%\section{Improving the constants}
%Material is placed here as needed.

%%% AUTHOR: optional acknowledgments here
\section*{Acknowledgments} %%  you may comment this out if no Ackno
I would like to thank Timothy Gowers for helpful discussions. I am also grateful to him and the anonymous referee for their valuable comments on a previous version of this paper.

%%% AUTHOR:
%%% Bibliography goes here. Note that the arXiv cannot process bibtex
%%% or biber bibliographies.  Example of acceptable bibliograpy format:
\bibliographystyle{amsplain}

\begin{thebibliography}{99}
%\bibitem{bergelson-johnson-moreira}
%Vitaly Bergelson, John H. Johnson Jr., and Joel Moreira.
%\newblock New polynomial and multidimensional extensions of classical partition
%  results.
%\newblock 2015, arXiv:1501.02408.

%\bibitem{cilleruelo}
%Javier Cilleruelo.
%\newblock Combinatorial problems in finite fields and {S}idon sets.
%\newblock {\em Combinatorica}, 32(5):497--511, 2012.

\bibitem{bhowmicklovett}
A. Bhowmick and S. Lovett.
\newblock Bias vs structure of polynomials in
	large fields, and applications in effective algebraic geometry and coding theory.
\newblock 2015, arXiv:1506.02047.

\bibitem{gowersjanzer}
W. T. Gowers and O. Janzer.
\newblock Subsets of Cayley graphs that induce many edges.
\newblock {\em Theory of Computing}, 15(20):1--29, 2019.

\bibitem{gowerswolf}
W. T. Gowers and J. Wolf.
\newblock Linear forms and higher-degree uniformity for
	functions on $\F_p^n$.
\newblock {\em Geometric and Functional Analysis}, 21(1):36--69, 2011.

\bibitem{greentao}
B. Green and T. Tao.
\newblock The distribution of polynomials over finite fields,
	with applications to the Gowers norms.
\newblock {\em Contributions to Discrete Mathematics},
4(2), 2009. 

\bibitem{HS10}
E. Haramaty, and A. Shpilka.
\newblock On the structure of cubic and quartic polynomials.
\newblock {\em Proceedings of the forty-second ACM symposium on Theory of computing}, pp. 331-340. ACM, 2010.

\bibitem{fouriersurvey}
H. Hatami, P. Hatami and S. Lovett.
\newblock Higher-order Fourier Analysis and Applications.

\bibitem{janzerrank}
O. Janzer.
\newblock Low analytic rank implies low partition rank for tensors.
\newblock 2018, arXiv:1809.10931.

\bibitem{kaufmanlovett}
T. Kaufman and S. Lovett.
\newblock Worst case to average case reductions for
	polynomials.
\newblock {\em 49th Annual IEEE symposium on Foundations of Computer Science}, 2008.

\bibitem{KZ18}
D. Kazhdan, T. Ziegler.
\newblock Approximate cohomology.
\newblock {\em Selecta Mathematica}, 24(1):499-509, 2018.

\bibitem{Lam19}
A. Lampert.
\newblock Bias Implies Low Rank for Quartic Polynomials.
\newblock 2019, arXiv:1902.10632.

\bibitem{lovett}
S. Lovett.
\newblock The analytic rank of tensors and its applications.
\newblock {\em Discrete Analysis}, 2019:7, 10pp.

\bibitem{Mil19}
L. Mili\'cevi\'c.
\newblock Polynomial bound for partition rank in terms of analytic rank.
\newblock {\em Geometric and Functional Analysis}, 29:1503--1530, 2019.

\bibitem{naslund}
E. Naslund.
\newblock The partition rank of a tensor and $k$-right corners in $\mathbb{F}_q^n$.
\newblock {\em Journal of Combinatorial Theory, Series A}, 174 (2020) 105190.

\bibitem{sanders}
T. Sanders.
\newblock On the Bogolyubov-Ruzsa lemma.
\newblock {\em Analysis \& PDE}, 5(3):627-55, 2012.

\end{thebibliography}

%% AUTHOR: You can generate such a bibliography from a .bib file by 
%% running pdflatex/bibtex/pdflatex/pdflatex and then pasting the .bbl file
%% between \begin{thebibliography} and \end{bibliography}

%%% AUTHOR: Include a short description of each author following the
%%% structure below. Use the same short tags used previously.  
%%% Use \imageat{} and \imagedot{} instead of "@" and "." in
%%% email addresses-this replaces the symbols with graphics to avoid 
%%% e-mail address harvesting from the .pdf file
\begin{dajauthors}
\begin{authorinfo}[janzer]
  Oliver Janzer\\
  University of Cambridge\\
  Cambridge, United Kingdom\\
  oj224\imageat{}cam\imagedot{}ac\imagedot{}uk \\
  \url{https://sites.google.com/view/oliver-janzer/home}
\end{authorinfo}
%\begin{authorinfo}[johan]
%  Johan H{\aa}stad\\
%  Professor\\
%  KTH - Royal Institute of Technology\\
%  Stockholm, Sweden\\
%  johanh\imageat{}ktth\imagedot{}se \\
%  \url{http://www.csc.kth.se/~johanh}
%\end{authorinfo}
%\begin{authorinfo}[laci]
%  L\'aszl\'o Lov\'asz\\
%  Professor\\
%  E\"otv\"os Lor\'and University\\
%  Budapest, Hungary\\
%  laci\imageat{}comb\imagedot{}elte\imagedot{}hu\\
%  \url{http://www.cs.elte.hu/~lovasz}
%\end{authorinfo}
%\begin{authorinfo}[andy]
%  Andrew Chi-Chih Yao\\
%  Professor\\
%  etc.
%\end{authorinfo}
\end{dajauthors}

\end{document}